\theoremstyle{definition}
\newtheorem{thm}{Theorem}[section]
\newtheorem{cor}[thm]{Corollary}
\newtheorem{prop}[thm]{Proposition}
\newtheorem{rmk}[thm]{Remark}
\def\tr{\mathrm{Tr}}
\def\ccc{\mathbb{C}}
\def\zz{\mathbb{Z}}
\def\rr{\mathbb{R}}
\def\pp{\mathbb{P}}
\def\pt{\partial}
\def\bpt{\bar{\pt}}
\def\ud{\mathrm{d}}
\def\bz{\bar{z}}
\def\ric{\mathrm{Ric}}
\begin{document}

\title{\textbf{Anomaly Flow and T-Duality}}

\author{Teng Fei and Sebastien Picard}

\date{}

\maketitle{}

\abstract{In this paper, we study the dual Anomaly flow, which is a dual version of the Anomaly flow under T-duality. A family of monotone functionals is introduced and used to estimate the dilaton function along the flow. Many examples and reductions of the dual Anomaly flow are worked out in detail.}

\section{Introduction}

Consider the following equation on a compact complex Calabi-Yau $n$-fold $(X^n,\Omega)$:
\begin{equation}\label{daf}
\pt_t\left(\|\Omega\|_\omega\omega^{n-1}\right)=-i\pt\bpt\left(\|\Omega\|^{\frac{2(n-2)}{n-1}}_\omega\cdot\omega^{n-2}\right),
\end{equation}
where $\omega=\omega_t$ is an evolving family of Hermitian metrics. Our definition of a Calabi-Yau manifold $(X,\Omega)$ is a complex manifold $X$ equipped with a holomorphic volume form $\Omega$ trivializing the canonical bundle $K_X$ of $X$. We will always assume that the initial metric $\omega_0$ is conformally balanced \cite{li2005, michelsohn1982}, i.e.

\begin{equation}\label{confbal}
\ud\left(\|\Omega\|_{\omega_0}\omega_0^{n-1}\right)=0.
\end{equation}

It is clear from the evolution equation (\ref{daf}) that the conformally balanced condition is preserved along the flow. Calabi-Yau manifolds admitting conformally balanced metrics are of interest in both heterotic string theory and non-K\"ahler complex geometry; see e.g. \cite{fu2010, fu2012, fu2014, goldstein2004, hull1986b, martelli2011, ossa2014, tosatti2017b, tseng2012, strominger1986}. We will discuss several examples of these manifolds in \S \ref{section-examples}. In \S \ref{section-evol-eqn}, we will see that equation (\ref{daf}) can be rewritten as an evolution equation of the Hermitian metric $\omega_t$.

We shall call equation (\ref{daf}) the \emph{dual Anomaly flow}, as it can be derived formally by taking T-duality of the Anomaly flow. The Anomaly flow on a threefold $(X^3,\Omega)$ is given by
\begin{equation}
\pt_t\left(\|\Omega\|_\omega\omega^{2}\right)=i\pt\bpt \omega - {\alpha' \over 4} ( {\rm Tr} \, Rm \wedge Rm - \Phi(t)),
\end{equation}
where $\alpha' \in \mathbb{R}$ is a fixed parameter, and $\Phi(t)$ is a given path of closed $(2,2)$ forms. For many examples of interest, $\Phi = {\rm Tr} \, F \wedge F$, where $F$ is the curvature of a connection on a holomorphic vector bundle $E \rightarrow X$.

The Anomaly flow was introduced by Phong-Picard-Zhang \cite{phong2018b,phong2018c,phong2017} as a parabolic system incorporating the anomaly cancellation equation of the Hull-Strominger system \cite{hull1986b,hull1986c,strominger1986} and its higher dimension analogue \cite{fu2008}. Its primary motivation was to implement the conformally balanced condition in the absence of an analogue of the $\partial \bar{\partial}$-lemma. The Anomaly flow has been shown to be a powerful tool in the study of the Hull-Strominger system and in non-K\"ahler complex geometry; see for example \cite{phong2018d,phong2018e,phong2017b,fei2017d,phong2018c}.

The Anomaly flow can also be studied while setting the $\alpha'$-corrections to zero, which on an $n$-fold $(X^n,\Omega)$ is given \cite{phong2018e,phong2018c} by

\begin{equation}\label{af}
\pt_t\left(\|\Omega\|_\omega\omega^{n-1}\right)=i\pt\bpt\left(\omega^{n-2}\right).
\end{equation}

We will show in \S \ref{section-sfcy} that for semi-flat Calabi-Yau $n$-folds, the dual Anomaly flow (\ref{daf}) is induced by (\ref{af}) on the dual torus fibration via T-duality.

T-duality is a duality in theoretical physics which relates the Type IIA and Type IIB string theory through mirror symmetry \cite{strominger1996}. It is unclear what should be the right notion of mirror symmetry for heterotic string theories which include the compactifications described by the Hull-Strominger system. As a modest attempt, we investigate the behavior of the Anomaly flow under T-duality to probe the geometry of the mirror Hull-Strominger system. However, our results are far from conclusive as we still do not know how to incorporate the $\alpha'$-correction terms into our considerations. Other proposed frameworks for studying T-duality and the Hull-Strominger system include \cite{baraglia2015,garciafernandez2016,garciafernandez2018b,lau2015}.


Compared to the Anomaly flow, the first striking difference is that the dual Anomaly flow is not a parabolic flow in the usual sense, and therefore we do not know whether a general short-time existence statement will hold. However, we shall see that there are many reductions of the dual Anomaly flow which are parabolic in nature, including the cases of generalized Calabi-Gray manifolds, K\"ahler Calabi-Yau manifolds, and other examples discussed in \S \ref{section-examples}. This suggests that there is a large class of initial data where the flow is indeed parabolic. Furthermore, we demonstrate that the dual Anomaly flow can be viewed as a generalization of the inverse Monge-Amp\`ere flow recently introduced by Cao-Keller \cite{cao2013} and Collins-Hisamoto-Takahashi \cite{collins2017}.

As is the case for the Anomaly flow (\ref{af}) with $\alpha'=0$, the dual Anomaly flow can be used to produce a continuous deformation path from conformally balanced metrics to K\"ahler metrics. Indeed, as shown in Theorem \ref{thm-station}, the stationary points of the flow are K\"ahler Ricci-flat metrics. Since the flow preserves the balanced class of the initial metric, the flow seeks a K\"ahler representative in a given balanced class. This is a subtle problem, as not all balanced classes on K\"ahler manifolds admit a K\"ahler metric; see \cite{tosatti2009,fu2014} for counterexamples and \cite{collins2017b,lejmi2015,xiao2016} for related conjectures. We hope that the dynamics of the dual Anomaly flow and Anomaly flow can help us understand when a balanced class admits a K\"ahler metric.

As a step in the development of the analysis of these evolution equations, we find a family of functionals which produce monotone quantities along both the dual Anomaly flow (\ref{daf}) and the Anomaly flow (\ref{af}). These functionals, parameterized by $\alpha \in \mathbb{R}$, are given by
\[
\mathcal{F}_\alpha(\omega) = \int_X \| \Omega \|_\omega^\alpha \, \frac{\omega^n}{n!}.
\]
When $\alpha=1$, this functional was used in \cite{garciafernandez2018} to formulate a variational principle for the Hull-Strominger system, and also used in \cite{phong2018c} to obtain convergence of the Anomaly flow with conformally K\"ahler initial data. For negative $\alpha$, this functional is monotone along the dual Anomaly flow, while for large positive $\alpha$, this functional is monotone along the Anomaly flow. Using these monotone quantities, we obtain the following estimates.

\begin{thm} \label{thm-estimate}
Let $(X,\Omega)$ be a compact Calabi-Yau manifold.
\smallskip
\par \noindent (a) Suppose $\check{\omega}_t$ solves the Anomaly flow (\ref{af}) on $X \times [0,T]$ with conformally balanced initial data $\check{\omega}_0$. Then
\[
\| \Omega \|_{\check{\omega}_t}(x,t) \leq \sup_X \| \Omega \|_{\check{\omega}_0}
\]
for all $(x,t) \in X \times[0,T]$.
\smallskip
\par \noindent (b) Suppose $\hat{\omega}_t$ solves the dual Anomaly flow (\ref{daf}) on $X \times [0,T]$ with conformally balanced initial data $\hat{\omega}_0$. Then
\[
\| \Omega \|_{\hat{\omega}_t}(x,t) \geq \inf_X \| \Omega \|_{\hat{\omega}_0}
\]
for all $(x,t) \in X \times[0,T]$.
\end{thm}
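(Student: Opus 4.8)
The plan is to deduce both pointwise bounds from the monotonicity of the functionals $\mathcal{F}_\alpha$ together with a limiting argument that upgrades $L^p$ control to $L^\infty$ control. The starting observation is the pointwise identity
\[
\|\Omega\|_\omega^2\,\frac{\omega^n}{n!}=c_n\,\Omega\wedge\bar\Omega=:\mu,
\]
where $c_n$ is a universal constant and $\mu$ is a \emph{fixed} smooth positive volume form, independent of the evolving metric. Writing $f=\|\Omega\|_\omega$, this rewrites the functional as $\mathcal{F}_\alpha(\omega)=\int_X f^{\alpha-2}\,\mu$, so that monotonicity of $\mathcal{F}_\alpha$ is precisely monotonicity of the (unnormalized) $L^{\alpha-2}(\mu)$-norm of $f$.

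First I would establish the monotonicity itself. Since $\mu$ is $t$-independent, $\tfrac{d}{dt}\mathcal{F}_\alpha=(\alpha-2)\int_X f^{\alpha-3}\,\dot f\,\mu$, and wedging the flow equation (\ref{af}) or (\ref{daf}) with $\omega$ expresses $\dot f\,\omega^n=\tfrac{n}{2-n}\,\pt_t(f\omega^{n-1})\wedge\omega$, i.e.\ $\dot f$ is controlled by $i\pt\bpt$ of an explicit $(n-2,n-2)$-form. I would then integrate by parts twice and use the conformally balanced condition (\ref{confbal}), which identifies the torsion (Lee) $1$-form of $\omega$ with $-d\log f$, to eliminate all torsion contributions and collapse the integrand into a single sign-definite gradient term of schematic form $\int_X f^{\,\alpha-3}\,|\pt f|_\omega^2\,\tfrac{\omega^n}{n!}$, weighted by a coefficient polynomial in $\alpha$. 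The substance of the monotonicity is that this coefficient is negative for all large positive $\alpha$ along (\ref{af}) and for all negative $\alpha$ along (\ref{daf}); in both regimes $\mathcal{F}_\alpha$ is non-increasing.

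Granting this, each estimate follows by sending $\alpha$ to an extreme value. For (a), fix $\alpha>2$ large and write $f=\|\Omega\|_{\check{\omega}_t}$, $f_0=\|\Omega\|_{\check{\omega}_0}$; monotonicity gives $\int_X f^{\alpha-2}\mu\le\int_X f_0^{\alpha-2}\mu$, and raising to the power $1/(\alpha-2)>0$ yields $\|f\|_{L^{\alpha-2}(\mu)}\le\|f_0\|_{L^{\alpha-2}(\mu)}$. Letting $\alpha\to+\infty$, and using $\mu(X)<\infty$ together with continuity of $f$, both sides converge to the respective suprema, so $\sup_X\|\Omega\|_{\check{\omega}_t}\le\sup_X\|\Omega\|_{\check{\omega}_0}$. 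For (b), fix $\alpha<0$, so $p:=\alpha-2<0$, and write $f=\|\Omega\|_{\hat{\omega}_t}$; monotonicity gives $\int_X f^{p}\mu\le\int_X f_0^{p}\mu$, and raising to the power $1/p<0$ reverses the inequality to $\|f\|_{L^{p}(\mu)}\ge\|f_0\|_{L^{p}(\mu)}$. Letting $\alpha\to-\infty$ sends both sides to the respective infima, giving $\inf_X\|\Omega\|_{\hat{\omega}_t}\ge\inf_X\|\Omega\|_{\hat{\omega}_0}$.

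The limiting step is routine once the measure $\mu$ is fixed, so the main obstacle is the monotonicity computation: verifying that after invoking (\ref{confbal}) the derivative $\tfrac{d}{dt}\mathcal{F}_\alpha$ genuinely reduces to a single gradient term with no sign-indefinite remainder, and then tracking the sign of its $\alpha$-dependent coefficient in the two extreme regimes. I also expect the degenerate case $n=2$ to need separate treatment, since the factor $\tfrac{n}{2-n}$ above breaks down there.
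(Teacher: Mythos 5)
Your overall strategy is the same as the paper's: establish monotonicity of $\mathcal{F}_\alpha$ in the two regimes, then upgrade the resulting $L^p$ bounds to $L^\infty$ by sending $\alpha\to\pm\infty$. Your regimes ($\alpha$ large positive for (\ref{af}), $\alpha<0$ for (\ref{daf})) are subsets of the paper's ($\alpha>2$ and $\alpha\le\frac{2}{n-1}$ respectively) and suffice for the limit. Your one genuine refinement is the rewriting $\mathcal{F}_\alpha(\omega)=\int_X f^{\alpha-2}\,\mu$ with $f=\|\Omega\|_\omega$ and $\mu=c_n\,\Omega\wedge\bar{\Omega}$ a fixed finite measure: this makes the $L^p\to L^\infty$ limit a statement about a single $t$-independent measure, whereas the paper takes the limit with the evolving volume forms $\omega_t^n/n!$ (still valid, since the limit is taken at each fixed $t$, but your version is cleaner). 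Your identity $\dot f\,\omega^n=\frac{n}{2-n}\,\pt_t(f\omega^{n-1})\wedge\omega$ is correct and is essentially how the paper derives its trace identity (\ref{id2}).

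The one point where your sketch would not survive contact with the computation is the expectation that integration by parts plus the conformally balanced condition (\ref{confbal}) ``eliminates all torsion contributions'' and collapses $\frac{d}{dt}\mathcal{F}_\alpha$ into a single gradient term. That exact cancellation is false: by the paper's identities (\ref{id1})--(\ref{id2}), along (\ref{daf}) one has
\[
\frac{d}{dt}\mathcal{F}_\alpha=\frac{\alpha-2}{2(n-1)}\int_X \|\Omega\|_\omega^{\frac{n-3}{n-1}+\alpha}\left(R+\frac{2n-6}{n-2}\|\tau\|^2+\frac{1}{n-2}\|T\|^2\right)\frac{\omega^n}{n!},
\]
and the torsion terms $\|\tau\|^2$, $\|T\|^2$ genuinely survive; in particular $\|T\|^2$ is not a gradient of $f$ and vanishes only for K\"ahler metrics, so no integration by parts removes it. What saves the argument is a sign, not a cancellation: these terms are nonnegative and are multiplied by $\alpha-2<0$ (while for (\ref{af}), where $\alpha-2>0$, they appear with minus signs inside the bracket), so they can simply be discarded as an inequality. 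Only the scalar-curvature term $R=\Lambda(2i\pt\bpt\log f)$ is then integrated by parts against the closed form $f\,\omega^{n-1}/(n-1)!$ (conformal balancedness being preserved in time), yielding the gradient term with coefficient $-\frac{\alpha-2}{n-1}\left(\alpha-\frac{2}{n-1}\right)$ for (\ref{daf}), respectively $-\frac{(\alpha-2)^2}{n-1}$ for (\ref{af}), which is indeed $\le 0$ in your regimes. With this correction --- a sign-favorable remainder rather than a vanishing one --- your proof goes through and coincides with the paper's; the $n=2$ degeneracy you flag is real but outside the theorem's scope, as both flows are trivial there and the paper implicitly assumes $n\ge3$.
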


As both these flows deform conformally balanced metrics towards K\"ahler metrics, they must diverge on manifolds which do not admit a K\"ahler metric. For known non-K\"ahler examples, the norm $\| \Omega \|_\omega$ shrinks to zero along the Anomaly flow, while the norm $\| \Omega \|_\omega$ expands to infinity along the dual Anomaly flow.

This paper is organized as follows. In Section \ref{section-sfcy}, we review the classical construction of semi-flat Calabi-Yau manifolds and then use them to derive the dual Anomaly flow equation (\ref{daf}). Basic evolution equations and the geometry of stationary points are derived in Section \ref{section-evol-eqn}. In Section \ref{section-monotone}, we prove monotonicity of the functional $\mathcal{F}_\alpha$ and prove Theorem \ref{thm-estimate}. In Section \ref{section-examples}, we investigate the dual Anomaly flow on various Calabi-Yau manifolds. In particular we show that the dual Anomaly flow with conformally K\"ahler initial data is equivalent to the inverse Monge-Amp\`ere flow, and thus the flow exists for all time and converges. However, for non-K\"ahler examples, including the generalized Calabi-Yau manifolds, the Iwasawa manifold and quotients of $SL(2,\ccc)$, various kinds of singularities may occur, demonstrating the rich and diverse behavior of the dual Anomaly flow.
~\\

\noindent\textbf{Acknowledgments}\\
We would like to thank S.-T. Yau and X.-W. Zhang for helpful discussions.

\section{Semi-Flat Calabi-Yau Manifolds and the Dual Anomaly Flow} \label{section-sfcy}

\subsection{Semi-flat Calabi-Yau manifolds}

First, let us review the construction of semi-flat Calabi-Yau manifolds. More details can be found in \cite{leung2005}, among other sources.

Let $B$ be an $n$-dimensional compact special integral affine manifold, in the sense that we can cover $B$ by local coordinate charts $\{x^1,x^2,\dots,x^n\}$ such that the transition functions are valued in the affine group $SL(n,\zz)\ltimes\rr^n$. Let $TB$ be the tangent bundle of $B$, with $\{y^1,y^2,\dots,y^n\}$ the natural coordinates on the tangent directions. It is clear that the fiberwise lattice
\[\check{\Lambda}=\zz\frac{\pt}{\pt x^1}+\dots+\zz\frac{\pt}{\pt x^n}=\{(y^1,\dots,y^n):y^j\in\zz\}\]
is well-defined. We can thus form the fiberwise quotient $\check{X}:=TB/\check{\Lambda}$, which has a natural $T^n$-fibration structure over $B$.

In addition, $\check{X}$ has a complex structure with holomorphic coordinates $z^j=x^j+iy^j$ for $j=1,\dots,n$. Furthermore, $\check{X}$ is a Calabi-Yau manifold as it admits a well-defined holomorphic volume form
\[\check{\Omega}=\ud z^1\wedge\dots\wedge\ud z^n.\]

For any Riemannian metric $g$ on $B$, there is a natural Hermitian metric on $\check{X}$ making $\check{\pi}:\check{X}\to B$ a Riemannian submersion. Its associated $(1,1)$-form is given by
\[\check{\omega}=\frac{i}{2}g_{jk}\ud z^j\wedge\ud\bz^k.\]
This is the so-called semi-flat metric, as it is $T^n$-translational invariant. Moreover, we have
\[\|\check{\Omega}\|^{-2}_g=c\cdot\det(g_{ab}),\]
where $c$ is a positive constant which can be taken to be 1 without loss of generality.

It is also clear that the metric $\check{\omega}$ is K\"ahler if and only if $g$ is a Hessian metric, i.e. in local coordinates one can find a strictly convex function $\phi$ such that
\[g_{jk}=\frac{\pt^2\phi}{\pt x^j\pt x^k}.\]
In summary, given a special integral affine manifold $B$ equipped with a Hessian metric, one can construct a Calabi-Yau manifold $\check{X}$ with a K\"ahler metric $\check{\omega}$. The metric $\check{\omega}$ is Ricci-flat if the potential $\phi$ solves the real Monge-Amp\`ere equation
\[\det(\nabla^2\phi)=1.\]

In this case, the natural projection $\check{X}\to B$ defines a special Lagrangian fibration.

Given a Hessian metric $g$, we can cook up a new set of affine coordinates $\left\{x_j=\dfrac{\pt\phi}{\pt x^j}\right\}$ by taking the Legendre transform on $B$, so that
\[\frac{\pt x_j}{\pt x^k}=g_{jk}\]
is a positive definite matrix. We write $(g^{jk})=(g_{jk})^{-1}$, so that the Hessian metric $g$ can also be written as
\[g=g^{jk}\ud x_j\otimes\ud x_k\]
on $B$. Again, there exists a local potential $\psi=\sum_jx^jx_j-\phi$ such that
\[g^{jk}=\frac{\pt^2\psi}{\pt x_j\pt x_k}.\]

Let $\{y_1,\dots,y_n\}$ be the natural coordinates on the cotangent directions (with respect to the coordinates $\{x^1,\dots,x^n\}$). It is clear that the fiberwise lattice
\[\hat{\Lambda}=\zz\ud x^1+\dots+\zz\ud x^n=\{(y_1,\dots,y_n):y_k\in\zz\}\]
is well-defined and we may form the fiberwise quotient
\[\hat{X}:=T^*B/\hat{\Lambda},\] which is also a $T^n$-fibration over $B$.

Moreover, $z_k=x_k+iy_k$ with $k=1,\dots,n$ is a set of holomorphic coordinates on $\hat{X}$ and the natural Hermitian metric on $\hat{X}$ can be expressed as
\[\hat{\omega}=\frac{i}{2}g^{jk}\ud z_j\wedge\ud\bz_k.\]

\begin{rmk}
The metric $g$ gives a natural identification of $TB$ with $T^*B$, explicitly given by
\[x^j=x^j,\quad y_l=y^k\phi_{kl}(x),\]
however under this identification the complex structures on $TB$ and $T^*B$ are not the same.
\end{rmk}
\begin{rmk}
The symplectic form $\hat{\omega}$ can be rewritten as
\[\hat{\omega}=\sum_j\ud x^j\wedge\ud y_j,\]
which is exactly induced from the canonical symplectic form on $T^*B$. In other words, the complex structure on $\hat{X}$ depends on the choice of the Hessian metric $g$, however the K\"ahler form $\hat{\omega}$ is independent of this choice. In this sense there is a correspondence between the K\"ahler moduli of $\check{X}$ and the complex moduli of $\hat{X}$, which provides the semi-flat model of mirror symmetry.
\end{rmk}

In conclusion, from a special integral affine manifold with a Hessian metric, one can construct a pair of Calabi-Yau manifolds $\check{X}$ and $\hat{X}$ with K\"ahler metrics. The passage from $(\check{X},\check{\omega})$ to $(\hat{X},\hat{\omega})$ is known as the T-duality, which manifests the case of mirror symmetry without quantum corrections.

\subsection{Toy model: the K\"ahler-Ricci flow}

Let us consider the K\"ahler-Ricci flow
\[\pt_t\check{\omega}=-\ric(\check{\omega})=i\pt\bpt\log(\check{\omega}^n)\]
on $\check{X}$. In the semi-flat setting, the K\"ahler-Ricci flow reduces to the following equation on $B$
\[\pt_t(g_{jk})=\frac{1}{2}\frac{\pt^2}{\pt x^j\pt x^k}\log\det(g_{ab}).\]
In terms of the dual coordinates $\{x_1,\dots,x_n\}$, we notice that the above equation can be expressed as
\[\pt_t(g^{jk})=\frac{1}{2}\frac{\pt^2}{\pt x_j\pt x_k}\log\det(g^{ab})-\frac{1}{2}\sum_q\frac{\pt}{\pt x^q}\log\det(g^{ab})\frac{\pt}{\pt x_j}(g^{qk}),\]
whose leading order term is exactly the semi-flat reduction of the K\"ahler-Ricci flow on $\hat{X}$.

By running the K\"ahler-Ricci flow on $\check{X}$, the complex structure is fixed and the flow is a flow of K\"ahler metrics. Correspondingly, on the mirror side $\hat{X}$, the symplectic structure on $\hat{X}$ is fixed, however its complex structure evolves along time. If we think of the first order terms in the above expression as coming from correction of complex structure, then we may conclude that the K\"ahler-Ricci flow is self-dual under T-duality, as we only care about the leading order term. In next subsection, we will adopt this point of view of discarding first order terms to derive the dual version of the Anomaly flow under T-duality.

\subsection{The dual Anomaly flow}

Let us run the Anomaly flow (\ref{af}) on $\check{X}$. As in \cite{phong2018c}, we assume that the initial metric $\check{\omega}'_0$ is conformally K\"ahler
\[\check{\omega}'_0=\|\check{\Omega}\|_{\check{\omega}_0}^{-\frac{2}{n-2}}\check{\omega}_0,\]
where $\check{\omega}_0$ is a semi-flat K\"ahler metric on $\check{X}$ constructed as before. In this case, the Anomaly flow preserves the conformally K\"ahler condition and after a conformal change the Anomaly flow can be written as a flow of K\"ahler metrics
\[\pt_t\check{\omega}=\frac{i}{n-1}\pt\bpt(\|\check{\Omega}\|_{\check{\omega}}^{-2}).\]
Furthermore this flow reduces to a flow on $B$
\[\pt_t(g_{jk})=\frac{1}{2(n-1)}\frac{\pt^2}{\pt x^j\pt x^k}\det(g_{ab}).\]
Using the argument in the previous subsection, it is not hard to see that the dual version is
\[\pt_t(g^{jk})=-\frac{1}{2(n-1)}\frac{\pt^2}{\pt x_j\pt x_k}(\det(g^{ab})^{-1}),\]
or equivalently
\[\pt_t\hat{\omega}=-\frac{i}{n-1}\pt\bpt(\|\hat{\Omega}\|^2_{\hat{\omega}}).\]
Since T-duality maps a circle of radius $R$ to a circle of radius of $R^{-1}$, we conclude that the dual conformally K\"ahler metric $\hat{\omega}'$ is given by
\[\hat{\omega}'=\|\check{\Omega}\|_{\check{\omega}}^{\frac{2}{n-2}}\hat{\omega}=\|\hat{\Omega}\|_{\hat{\omega}}^{-\frac{2}{n-2}}\hat{\omega}.\]
In terms of the dual metric $\hat{\omega}'$, the dual Anomaly flow reads
\[\pt_t(\|\hat{\Omega}\|_{\hat{\omega}'}\cdot\hat{\omega}'^{n-1})=-i\pt\bpt\left(\|\hat{\Omega}\|_{\hat{\omega}'}^{\frac{2(n-2)}{n-1}}\hat{\omega}'^{n-2} \right),\]
which is exactly equation (\ref{daf}). For this reason, we shall call (\ref{daf}) the \emph{dual Anomaly flow}, which makes sense for any Calabi-Yau manifold (not necessarily K\"ahler). However, we only restrict ourselves to the case that the initial metric satisfies the conformally balanced equation (\ref{confbal}).

\section{Evolution Equations and Stationary Points} \label{section-evol-eqn}

\subsection{Evolution equations}

We now derive various evolution equations along the dual Anomaly flow
\[\pt_t\left(\|\Omega\|_\omega\cdot\omega^{n-1}\right)=-i\pt\bpt\left(\|\Omega\|^{\frac{2(n-2)}{n-1}}_\omega\cdot\omega^{n-2}\right)\]
with conformally balanced initial data.

First, just like the case of Anomaly flow \cite{phong2018e}, the dual Anomaly flow is actually a flow of Hermitian metrics. To see this, let us denote the right hand side of (\ref{daf}) by $\Phi=-i\pt\bpt\left(\|\Omega\|^{\frac{2(n-2)}{n-1}}_\omega\cdot\omega^{n-2}\right)$. The flow can be rewritten as
\[-\frac{1}{2}\|\Omega\|_\omega\tr~\dot\omega\cdot\omega^{n-1}+(n-1)\|\Omega\|_\omega\dot\omega\wedge\omega^{n-2}=\Phi.\]
Here we defined the trace of a $(1,1)$ form $\alpha$ by
\[
n \alpha \wedge \omega^{n-1} = (\tr~\alpha) \, \omega^n.
\]
By taking the Hodge star of both sides, we get
\[*\Phi=(n-1)!\|\Omega\|_\omega\left(\frac{1}{2}\tr~\dot\omega\cdot\omega-\dot\omega\right).\]
Taking the trace of both sides, we get
\[(n-1)!\|\Omega\|_\omega\tr~\dot\omega=\frac{2\tr(*\Phi)}{n-2},\]
and therefore
\begin{equation}
 \label{daf2}
(n-1)!\|\Omega\|_\omega\dot\omega=-*\Phi+\frac{\tr(*\Phi)}{n-2}\omega.
\end{equation}
Hence the dual Anomaly flow is indeed equivalent to a flow of Hermitian metrics.

To get an explicit expression for the flow of $\omega$, we need to compute $*\Phi$. 
It is straightforward to calculate that
\[\begin{split}\Phi=&-(n-2)\|\Omega\|_\omega^{\frac{2(n-2)}{n-1}}\left(\frac{1}{n-1}\rho\wedge\omega^{n-2}+\frac{4(n-2)}{(n-1)^2}i\theta\wedge\bar\theta\wedge\omega^{n-2}\right.\\ &+\left.\frac{2(n-2)}{n-1}(T\wedge\bar\theta+\bar T\wedge\theta)\wedge\omega^{n-3}+i\pt\bpt\omega\wedge\omega^{n-3}+i(n-3)T\wedge\bar T\wedge\omega^{n-4}\right),\end{split}\]
where
\[ \theta = \partial \log \| \Omega \|_\omega, \ \ T = i \partial \omega, \ \ \bar{T} = - i \bar{\partial} \omega\]
are certain differential forms involving first derivatives of the metric, and
\[ \rho=2i\pt\bpt\log\|\Omega\|_\omega \]
is the (first) Chern-Ricci form. For simplicity, we write
\[\Phi=-(n-2)\|\Omega\|_\omega^{\frac{2(n-2)}{n-1}}(A\wedge\omega^{n-2}+B\wedge\omega^{n-3}+(n-3)C\wedge\omega^{n-4})\]
for
\[\begin{split}A&=\frac{\rho}{n-1}+\frac{4i(n-2)}{(n-1)^2}\theta\wedge\bar\theta,\\ B&=\frac{2n-4}{n-1}(T\wedge\bar\theta+\bar T\wedge\theta)+i\pt\bpt\omega,\\ C&=iT\wedge\bar{T}.\end{split}\]

By using the primitive decomposition of differential forms and the Hodge star operator, it is possible to compute that the dual Anomaly flow (\ref{daf2}) is equivalent to
\begin{equation}\label{half}
\dot\omega=-\frac{\|\Omega\|_\omega^{\frac{n-3}{n-1}}}{n-1}\left((n-2)A+\Lambda B+\frac{\Lambda^2C}{2}+\left(\Lambda A-\frac{\Lambda^3C}{6(n-2)}\right)\omega\right),
\end{equation}
where $\Lambda$ is the operator of contracting with $\omega$, and we should take $C=0$ when $n=3$.

To demonstrate how this works, we give the details of the calculation of $*(C\wedge\omega^{n-4})$, which is one term in $*\Phi$. Notice that $C$ is a $(3,3)$-form. Let
\[C=P^6+P^4\wedge\omega+P^2\wedge\omega^2+P^0\omega^3\]
be the primitive decomposition of $C$, where $P^k$ is a primitive $k$-form.
It follows that
\[C\wedge\omega^{n-4}=P^2\wedge\omega^{n-2}+P^0\omega^{n-1}.\]
Using the well-known Hodge star operator formula for primitive forms (see for example \cite[p. 37]{huybrechts2005}), we get
\[*(C\wedge\omega^{n-4})=-(n-2)!P^2+(n-1)!P^0\omega.\]
Applying $\Lambda$ to $C$ repeatedly and using the fact that $\{H,L,\Lambda\}$ forms an $sl_2$ triple (using notation of \cite{huybrechts2005}), we derive that
\[\begin{split}\Lambda C&=(n-2)P^4+2(n-3)P^2\wedge\omega+3(n-2)P^0\omega^2,\\ \frac{\Lambda^2C}{2}&=(n-3)(n-2)P^2+3(n-2)(n-1)P^0\omega,\\ \frac{\Lambda^3C}{6}&=(n-2)(n-1)nP^0.\end{split}\]
Therefore we conclude that
\[\begin{split}*(C\wedge\omega^{n-4})&=(n-4)!\left(\frac{\Lambda^3C}{6}\omega-\frac{\Lambda^2C}{2}\right),\\ \tr*(C\wedge\omega^{n-4})&=(n-3)!\frac{\Lambda^3C}{6}.\end{split}\]
Other terms can be calculated in a similar manner, leading to
\[\begin{split}*(A \wedge\omega^{n-2})&= - (n-2)! A + (n-2)! \Lambda A \, \omega,\\ * (B \wedge\omega^{n-3})&= -(n-3)!\Lambda B + (n-3)! \frac{\Lambda^2 B}{2} \omega.\end{split}\]

To obtain an explicit expression in (\ref{half}), let us make use of the following identities in conformally balanced geometry \cite{phong2018e}
\[\rho'=\rho''=\frac{1}{2}\rho,\quad\theta=i\bpt^*\omega=:\tau,\]
where
\[\rho'=iR_{\bar{k}j}'\ud z^j\wedge\ud\bar{z}^k,\quad \rho''=iR_{\bar{k}j}''\ud z^j\wedge\ud\bar{z}^k\]
in the notation of \cite{phong2018e}. The forms $\rho'$ and $\rho''$ are sometimes known as the 3rd and 4th Chern-Ricci forms. Additionally in local coordinates, we have the following expressions
\[\omega=ig_{\bar{k}j}\ud z^j\wedge\ud\bz^k,\quad T=\frac{1}{2}T_{\bar{k}jl}\ud z^l\wedge\ud z^j\wedge\ud\bz^k,\quad \tau=g^{j\bar{k}}T_{\bar{k}jl}\ud z^l,\]
and the curvature conventions
\[
R_{\bar{k} j}{}^p{}_q = -\partial_{\bar{k}} (g^{p \bar{s}} \partial_j g_{\bar{s} q}),
\]
\[
R_{\bar{k} j} = R_{\bar{k} j}{}^p{}_p, \ \ \tilde{R}_{\bar{k} j} = R^p{}_{p \bar{k} j}, \ \ R_{\bar{k} j}' = R_{\bar{k}}{}^p{}_{pj}, \ \ R_{\bar{k} j}'' = R^p{}_{j \bar{k} p}.
\]
Straightforward calculation reveals that
\[\begin{split}A&=\frac{\rho}{n-1}+\frac{4i(n-2)}{(n-1)^2}\tau\wedge\bar{\tau},\\ \Lambda A&=\frac{R}{n-1}+\frac{4(n-2)}{(n-1)^2}\|\tau\|^2,\\ \Lambda B&=-\tilde{\rho}+iT\boxdot\bar T+\frac{2n-4}{n-1}\Lambda(\pt\omega\wedge\pt^*\omega+\bpt\omega\wedge\bpt^*\omega),\\ \frac{\Lambda^2C}{2}&=-\Lambda(\pt\omega\wedge\pt^*\omega+\bpt\omega\wedge\bpt^*\omega)-i\tau\wedge\bar\tau-iT\boxdot\bar T-\frac{i}{2}T\circ\bar T,\\ \frac{\Lambda^3C}{6}&=\|\tau\|^2-\frac{1}{2}\|T\|^2.\end{split}\]
Here $\tilde{\rho}=i\tilde{R}_{\bar{k}j}\ud z^j\wedge\ud\bar{z}^k $, and $iT\boxdot\bar{T}$ and $iT\circ\bar{T}$ are nonnegative real (1,1)-forms as in \cite{liu2017} defined by
\[\begin{split}iT\boxdot\bar{T}&=iT_{\bar{k}st}\overline{T_{\bar{j}uv}}g^{j\bar{k}}g^{t\bar{v}}\ud z^s\wedge\ud\bz^u,\\ iT\circ\bar{T}&=iT_{\bar{k}st}\overline{T_{\bar{j}uv}}g^{s\bar{u}}g^{t\bar{v}}\ud z^j\wedge\ud\bz^k.
\end{split}\]
They satisfy
\[\Lambda(iT\boxdot\bar{T})=\Lambda(iT\circ\bar{T})=\|T\|^2\geq0.\]

Combining all our calculations, we get the evolution equation of the metric
\begin{equation}\label{evo}
\begin{split}\dot\omega=&-\frac{\|\Omega\|_\omega^{\frac{n-3}{n-1}}}{n-1}\left(\frac{n-2}{n-1}\rho-\tilde{\rho}+\frac{R\omega}{n-1}\right.\\ +& \frac{(n-3)(3n-5)}{(n-1)^2}i\tau\wedge\bar\tau+\frac{n-3}{n-1}\Lambda(\pt\omega\wedge\pt^*\omega+\bpt\omega\wedge\bpt^*\omega)-\frac{i}{2}T\circ\bar T\\ +&\left.\left(\frac{(n-3)(3n-5)}{(n-1)^2(n-2)}\|\tau\|^2+\frac{\|T\|^2}{2n-4}\right)\omega\right)\end{split}
\end{equation}
for $n>3$
and
\[\dot\omega=-\frac{1}{4}\left(\rho-2\tilde{\rho}+2i\tau\wedge\bar\tau+2\Lambda(\pt\omega\wedge\pt^*\omega+\bpt\omega\wedge\bpt^*\omega)+ 2iT\boxdot\bar{T}+R\omega+2 \|\tau\|^2\omega\right)\]
for $n=3$.

The reason that we need a separate formula for $n=3$ case is that in deriving (\ref{evo}) we need to cancel certain $(n-3)$ factors. However, if we put $n=3$ in (\ref{evo}), we do get the right evolution equation for $\omega$
\begin{equation}\label{evo3}
\dot\omega=-\frac{1}{4}\left(\rho-2\tilde{\rho}+R\omega-iT\circ\bar{T}+\|T\|^2\omega\right).
\end{equation}

It is because that the following identity holds when $n=3$.
\begin{prop}~\\
If $X$ is of complex dimension 3, then for any Hermitian metric $\omega$, we have the following identity
\[\Lambda(\pt^*\omega\wedge\pt\omega+\bpt^*\omega\wedge\bpt\omega)=iT\boxdot\bar T+\frac{i}{2}T\circ\bar{T}-\frac{\|T\|^2}{2}\omega+\|\tau\|^2\omega+i\tau\wedge\bar\tau.\]
\end{prop}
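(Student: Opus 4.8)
The plan is to prove this as a pointwise algebraic identity. Since both sides are $(1,1)$-forms built tensorially from the metric $g$ and its first derivatives (entering only through the torsion $T$), it suffices to verify the equality at a fixed point $p$ in local coordinates chosen so that $g_{\bk j}(p)=\delta_{kj}$. There all the contractions $g^{j\bk}$ reduce to Kronecker deltas, so I may treat the components $T_{\bk jl}$ as the only data, remembering that they are antisymmetric in the holomorphic indices $j,l$.

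First I would record explicit coordinate expressions for the four forms entering the left-hand side. From $\omega=ig_{\bk j}\,\ud z^j\wedge\ud\bz^k$ one obtains $\pt\omega$ and, by conjugation (using that $\omega$ is real), $\bpt\omega$, each a degree-three form whose coefficients are the components $T_{\bk jl}$ and $\overline{T_{\bk jl}}$. For the codifferentials I would invoke the identity $\tau=i\bpt^*\omega=g^{j\bk}T_{\bk jl}\,\ud z^l$ supplied in the text, so that $\bpt^*\omega=-i\tau$ and, by reality of $\omega$, $\pt^*\omega=\overline{\bpt^*\omega}=i\bar\tau$; note that at $p$ the component $\tau_l$ is the trace $\sum_a T_{\ba al}$ of the full torsion.

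Next I would form the two $(2,2)$-forms $\pt^*\omega\wedge\pt\omega$ and $\bpt^*\omega\wedge\bpt\omega$ — which are conjugate to one another, so only one genuine computation is needed — and apply $\Lambda$. Carrying out the contraction produces a $(1,1)$-form whose coefficients are quadratic in $T$, but in a mixed way: some terms pair a fully-contracted trace factor $\tau$ with a raw factor $T$, whereas the target on the right-hand side contains both the purely-traced invariants $\|\tau\|^2\omega$, $i\tau\wedge\bar\tau$ and the full contractions $iT\boxdot\bar T$, $\tfrac{i}{2}T\circ\bar T$, $\|T\|^2\omega$.

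The crux — and the only place the hypothesis $n=3$ enters — is reorganizing these mixed quadratic expressions. In complex dimension three the torsion, being antisymmetric in $(j,l)$, can be written as $T_{\bk jl}=\epsilon_{jlp}S_{\bk}{}^p$, and the trace becomes $\tau_l=\epsilon_{alp}S_{\ba}{}^p$. Contracting products of two such factors then reduces by the three-dimensional identity $\sum_p\epsilon_{jlp}\epsilon_{stp}=\delta_{js}\delta_{lt}-\delta_{jt}\delta_{ls}$, which converts each trace-times-raw term into a combination of $\boxdot$, $\circ$, and norm contractions. Matching coefficients then yields the stated identity. The main obstacle is organizational rather than conceptual: keeping the signs and the $\Lambda$-contraction combinatorics straight on the $(2,2)$-forms, and applying the $\epsilon$-identity in the correct pattern. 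It is precisely the failure of this two-into-one collapse in dimensions $n>3$ that forces the separate evolution equation recorded there.
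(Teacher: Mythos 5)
Your proposal is correct and is, in substance, the paper's own argument: the paper disposes of this proposition with the single line ``straightforward calculation,'' and your plan --- reduce to a pointwise identity at a point where $g_{\bar{k}j}=\delta_{kj}$, note that all first derivatives of the metric enter only through $T$ via $\partial\omega=-iT$ and $\bar{\partial}^*\omega=-i\tau$ (the latter valid for \emph{any} Hermitian metric, which you correctly separate from the conformally balanced identity $\theta=\tau$), expand $\Lambda$ of the two conjugate $(2,2)$-forms, and collapse the resulting mixed $\tau$--$T$ terms using $T_{\bar{k}jl}=\epsilon_{jlp}S_{\bar{k}}{}^{p}$ together with $\sum_p \epsilon_{jlp}\epsilon_{stp}=\delta_{js}\delta_{lt}-\delta_{jt}\delta_{ls}$ --- is a sound way to carry that calculation out. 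Your $\epsilon$-dualization also correctly pinpoints why the identity is special to $n=3$, consistent with the paper's remark that no analogous identity seems to exist in higher dimensions.
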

\begin{proof}
Straightforward calculation.
\end{proof}
\begin{rmk}~\\
Notice that all the terms in the above identity are real (1,1)-forms which are quadratic in first derivatives of the metric $\omega$. However it seems that there is no analogous identities in higher dimensions.
\end{rmk}

The flow of the metric $\omega$ (\ref{evo}) is not parabolic, so it is unclear whether a general short-time existence statement will hold. It is intriguing that all examples studied so far do in fact reduce to an equation which admits a solution for a short-time; see \S \ref{section-examples}.


Lastly, we note that a straight-forward calculation gives the following evolution equations
\begin{eqnarray}
\label{id1}\dot\omega\wedge\frac{\omega^{n-1}}{(n-1)!}&=&-\frac{\|\Omega\|_\omega^{\frac{n-3}{n-1}}}{n-1}\left(R+\frac{2n-6}{n-2}\|\tau\|^2+\frac{1}{n-2}\|T\|^2\right)\frac{\omega^n}{n!},\\ \label{id2}\pt_t\|\Omega\|_\omega&=&\frac{\|\Omega\|_\omega^{\frac{2n-4}{n-1}}}{2(n-1)}\left(R+\frac{2n-6}{n-2}\|\tau\|^2+\frac{1}{n-2}\|T\|^2\right).
\end{eqnarray}

\subsection{Stationary points}

Our next result implies that stationary points of the dual Anomaly flow must be K\"ahler Ricci-flat. The dual Anomaly flow and the Anomaly flow are thus two different continuous deformations from conformally balanced complex geometry to K\"ahler geometry.

\begin{thm} \label{thm-station}~\\
Let $(X^n,\Omega)$ be a compact Calabi-Yau manifold of dimension $n \geq 3$. A Hermitian metric $\omega$ satisfying
\[
d ( \| \Omega \|_\omega \omega^{n-1}) = 0,
\]
and
\[
i \partial \bar{\partial} ( \| \Omega \|_\omega^{2(n-2) \over n-1} \omega^{n-2}) =0,
\]
must be K\"ahler Ricci-flat.
\end{thm}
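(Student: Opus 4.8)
The plan is to recognize that the two hypotheses say precisely that $\omega$ is a stationary point of the dual Anomaly flow, and then to extract one pointwise and one global curvature identity whose combination forces the torsion to vanish. The second hypothesis is exactly $\Phi = -i\pt\bpt(\|\Omega\|_\omega^{2(n-2)/(n-1)}\omega^{n-2}) = 0$, so by (\ref{daf2}) we have $\dot\omega = 0$. Substituting this into (\ref{id1}), the left-hand side vanishes, and since $\|\Omega\|_\omega > 0$ we obtain the pointwise identity
\[
R + \frac{2n-6}{n-2}\|\tau\|^2 + \frac{1}{n-2}\|T\|^2 = 0
\]
on all of $X$. Every term except $R$ is manifestly nonnegative for $n \geq 3$, so this forces $R \leq 0$ pointwise; but because the Chern scalar curvature $R$ need not have a sign a priori, this relation alone does not separate the two torsion contributions, and a global argument is needed.

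The key step I would carry out next is an integration by parts using the conformally balanced condition. Since $\rho = 2i\pt\bpt\log\|\Omega\|_\omega$ is globally $\pt\bpt$-exact and $R\,\omega^n/n! = \rho\wedge\omega^{n-1}/(n-1)!$, I would set $u = \log\|\Omega\|_\omega$ and integrate $\int_X \rho\wedge\omega^{n-1}/(n-1)!$ by parts. Separating types in $d(\|\Omega\|_\omega\omega^{n-1}) = 0$ gives $\pt\omega^{n-1} = -\pt u\wedge\omega^{n-1}$, and for conformally balanced metrics $\pt u = \theta = \tau$; feeding these into the integration by parts collapses all the boundary terms to the Dirichlet-type quantity $\|\tau\|^2$, yielding the global identity $\int_X R\,\omega^n/n! = 2\int_X\|\tau\|^2\,\omega^n/n!$. (The same computation with an extra weight $\|\Omega\|_\omega^\beta$ produces a factor $2(1-\beta)$, which is exactly what underlies the monotonicity of $\mathcal{F}_\alpha$.)

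Finally, I would integrate the pointwise identity over $X$ and substitute the global identity to eliminate $\int_X R$, leaving
\[
\Big(2 + \frac{2n-6}{n-2}\Big)\int_X\|\tau\|^2\,\frac{\omega^n}{n!} = -\frac{1}{n-2}\int_X\|T\|^2\,\frac{\omega^n}{n!}.
\]
The coefficient on the left equals $(4n-10)/(n-2)$, which is strictly positive for $n \geq 3$, so the left side is $\geq 0$ while the right side is $\leq 0$; hence both integrals vanish and $\tau \equiv 0$, $T \equiv 0$. Since $T = i\pt\omega$, the vanishing of $T$ means $\pt\omega = 0$, so $\omega$ is K\"ahler; together with conformal balancedness this forces $\|\Omega\|_\omega$ to be constant, whence $\rho = 2i\pt\bpt\log\|\Omega\|_\omega = 0$ and the K\"ahler metric is Ricci-flat. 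The main obstacle is the integration by parts of the second paragraph: one must track the torsion carefully enough to confirm that the conformally balanced condition reduces every contribution to $\|\tau\|^2$ with the stated constant. One should also check separately the degenerate case $n=3$, where the $\|\tau\|^2$ term drops out of the pointwise identity; there the same reasoning gives $2\int_X\|\tau\|^2 = -\int_X\|T\|^2$, so the conclusion persists.
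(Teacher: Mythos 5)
Your proof is correct, and it takes a genuinely different route from the paper's. Both arguments begin the same way: stationarity together with (\ref{daf2}) gives $\dot\omega = 0$, and the trace identity (\ref{id1}) yields the pointwise relation $R + \frac{2n-6}{n-2}\|\tau\|^2 + \frac{1}{n-2}\|T\|^2 = 0$. From there the paper argues pointwise: since $\rho = 2i\partial\bar\partial\log\|\Omega\|_\omega$, one has $R = 2\Delta_c \log\|\Omega\|_\omega$, so the relation says $\log\|\Omega\|_\omega$ is superharmonic for the complex Laplacian on a compact manifold; the strong maximum principle forces it to be constant, whence $R=0$, then $\|T\|^2 = 0$ from the same relation, and $\rho = 0$. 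You instead argue globally, and your key identity $\int_X R \, \omega^n/n! = 2\int_X \|\tau\|^2 \, \omega^n/n!$ does check out: writing $R\,\omega^n/n! = 2i\partial\bar\partial u \wedge \omega^{n-1}/(n-1)!$ with $u = \log\|\Omega\|_\omega$, Stokes' theorem plus the $(n,n-1)$-part of the balanced condition, $\partial \omega^{n-1} = -\partial u \wedge \omega^{n-1}$, together with $\theta = \partial u = \tau$ for conformally balanced metrics, produces exactly the factor $2$ with no stray torsion terms (the only other boundary contribution, $i\bar\partial u \wedge \bar\partial \omega^{n-1}$, vanishes by type), so the obstacle you flagged in your second paragraph is in fact harmless. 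Your coefficient bookkeeping is right as well: $2 + \frac{2n-6}{n-2} = \frac{4n-10}{n-2} > 0$ for $n \geq 3$, and the case $n=3$ needs no separate treatment since the general formula specializes to $2\int \|\tau\|^2 = -\int \|T\|^2$ exactly as you note. What each approach buys: the paper's maximum-principle argument is shorter and delivers constancy of the dilaton pointwise in one stroke; your integral argument replaces the strong maximum principle by Stokes' theorem alone, kills $\tau$ and $T$ simultaneously, and makes explicit that the identity driving Theorem \ref{thm-station} is the very same integration by parts underlying the monotonicity of $\mathcal{F}_\alpha$ in Section \ref{section-monotone} (your weighted remark with the factor $2(1-\beta)$ is also correct). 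One small streamlining at the end: once $\tau \equiv 0$ you already have $\partial \log\|\Omega\|_\omega = 0$, hence $\|\Omega\|_\omega$ is constant and $\rho = 2i\partial\bar\partial\log\|\Omega\|_\omega = 0$ directly, without needing to re-invoke the balanced condition to conclude Ricci-flatness.
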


\begin{proof}
Suppose $\omega$ is a stationary point. By (\ref{id1}), we have
\[0=\dot\omega\wedge\frac{\omega^{n-1}}{(n-1)!}=-\frac{\|\Omega\|_\omega^{\frac{n-3}{n-1}}}{n-1}\left(R+\frac{2n-6}{n-2}\|\tau\|^2+\frac{1}{n-2}\|T\|^2\right)\frac{\omega^n}{n!}.\]
Therefore
\[2\Delta_c\log\|\Omega\|_\omega=R=-\frac{2n-6}{n-2}\|\tau\|^2-\frac{1}{n-2}\|T\|^2\leq 0,\]
where $\Delta_c$ is the complex Laplacian associated to $\omega$. By the maximum principle, we must have that $\log \| \Omega \|_\omega$ is a constant. It follows that $\|T\|^2=0$, and hence $\omega$ is K\"ahler and Ricci-flat.
\end{proof}

\section{Monotone Quantities and Related Estimates} \label{section-monotone}
In this section, we show that along the dual Anomaly flow, the norm $\| \Omega \|_\omega$ cannot shrink to zero. However, $\| \Omega \|_\omega$ may tend to infinity, and we will later study examples where this behavior is observed.
\par First, we introduce a functional which is monotone along the dual Anomaly flow.

\begin{thm}~\\
Suppose $\omega_t$ is a family of Hermitian metrics satisfying the dual Anomaly flow (\ref{daf}) with conformally balanced initial data. Then for any $\alpha\leq\dfrac{2}{n-1}<2$, the functional
\[\mathcal{F}_\alpha(\omega)=\int_X\|\Omega\|_\omega^{\alpha}\frac{\omega^n}{n!}\]
satisfies
\[
{d \over dt} \mathcal{F}_\alpha(\omega_t) \leq 0
\]
and is thus monotonically nonincreasing along the flow.
\end{thm}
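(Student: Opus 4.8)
The plan is to differentiate $\mathcal{F}_\alpha$ directly under the integral sign and feed in the two evolution equations (\ref{id1}) and (\ref{id2}) already derived. Writing $u=\|\Omega\|_\omega$ and abbreviating $Q=R+\frac{2n-6}{n-2}\|\tau\|^2+\frac{1}{n-2}\|T\|^2$, the product rule gives
\[
\frac{d}{dt}\mathcal{F}_\alpha=\int_X\alpha\, u^{\alpha-1}\,\pt_t u\,\frac{\omega^n}{n!}+\int_X u^\alpha\,\pt_t\Big(\frac{\omega^n}{n!}\Big),
\]
and since $\pt_t(\omega^n/n!)=\dot\omega\wedge\omega^{n-1}/(n-1)!$, both terms are controlled by (\ref{id1}) and (\ref{id2}). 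A short bookkeeping check shows the powers of $u$ in the two terms agree (both equal $\beta:=\alpha+\frac{n-3}{n-1}$) and the constants combine, so that
\[
\frac{d}{dt}\mathcal{F}_\alpha=\frac{\alpha-2}{2(n-1)}\int_X u^{\beta}\,Q\,\frac{\omega^n}{n!}.
\]
Since $\alpha\le\frac{2}{n-1}<2$, the prefactor is strictly negative, so it suffices to prove $\int_X u^\beta Q\,\frac{\omega^n}{n!}\ge 0$.

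Next I would deal with the scalar curvature piece of $Q$. The two torsion terms $\|\tau\|^2$ and $\|T\|^2$ are pointwise nonnegative with nonnegative coefficients for $n\ge 3$, but $R$ has no sign. As in the proof of Theorem \ref{thm-station}, $R=2\Delta_c f$ with $f=\log\|\Omega\|_\omega$, so $\int_X u^\beta R\,\frac{\omega^n}{n!}$ is a divergence-type quantity that must be integrated by parts. This is the main obstacle: the Chern Laplacian does not integrate by parts cleanly against $\omega^n/n!$ because of torsion, and the correct tool is the conformally balanced condition. Differentiating $d(\|\Omega\|_\omega\omega^{n-1})=0$ yields $\pt(\omega^{n-1})=-\pt f\wedge\omega^{n-1}$ together with its conjugate, which is exactly what absorbs the torsion contributions.

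Concretely, I would apply Stokes to the exact $(n-1,n)$-form $u^\beta\, i\,\bpt f\wedge\frac{\omega^{n-1}}{(n-1)!}$. Expanding $d=\pt$ on it, using $\pt(i\bpt f)=i\pt\bpt f$ and $\pt(\omega^{n-1})=-\pt f\wedge\omega^{n-1}$, and recalling the primitive/$\Lambda$ identities $i\pt\bpt f\wedge\frac{\omega^{n-1}}{(n-1)!}=(\Delta_c f)\frac{\omega^n}{n!}$ and $i\pt f\wedge\bpt f\wedge\frac{\omega^{n-1}}{(n-1)!}=\|\tau\|^2\frac{\omega^n}{n!}$ (here $\theta=\pt f=\tau$ in conformally balanced geometry), I expect to obtain
\[
\int_X u^\beta R\,\frac{\omega^n}{n!}=-2(\beta-1)\int_X u^\beta\|\tau\|^2\,\frac{\omega^n}{n!}.
\]

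Finally, substituting this back gives
\[
\int_X u^\beta Q\,\frac{\omega^n}{n!}=\Big(-2(\beta-1)+\tfrac{2n-6}{n-2}\Big)\int_X u^\beta\|\tau\|^2\,\frac{\omega^n}{n!}+\frac{1}{n-2}\int_X u^\beta\|T\|^2\,\frac{\omega^n}{n!}.
\]
Since $\beta-1=\alpha-\frac{2}{n-1}$, the coefficient of the $\|\tau\|^2$ integral is $-2\alpha+\frac{4}{n-1}+\frac{2(n-3)}{n-2}$, which is nonnegative since $\alpha\le\frac{2}{n-1}$ forces $-2\alpha+\frac{4}{n-1}\ge 0$ while $\frac{2(n-3)}{n-2}\ge 0$ for $n\ge 3$. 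Thus the right-hand side is a sum of nonnegative terms, so $\int_X u^\beta Q\,\frac{\omega^n}{n!}\ge 0$, and combined with the negative prefactor this yields $\frac{d}{dt}\mathcal{F}_\alpha\le 0$. The only delicate point is the sign bookkeeping and the primitive-form identities in the integration by parts; the $n=3$ case ($\beta=\alpha$, vanishing $\|\tau\|^2$ coefficient in $Q$) should be handled in the same way and in fact makes the bound $\alpha\le\frac{2}{n-1}$ sharp there.
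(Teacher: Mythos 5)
Your proposal is correct and follows essentially the same route as the paper: differentiate $\mathcal{F}_\alpha$ via (\ref{id1}) and (\ref{id2}) to get the prefactor $\frac{\alpha-2}{2(n-1)}$ times $\int_X u^\beta Q\,\frac{\omega^n}{n!}$, then integrate the scalar curvature term by parts against the conformally balanced structure (your identity $\int_X u^\beta R\,\frac{\omega^n}{n!}=-2(\beta-1)\int_X u^\beta\|\tau\|^2\,\frac{\omega^n}{n!}$ is exactly the paper's step, since $\theta=\tau$ and $\beta-1=\alpha-\frac{2}{n-1}$), and conclude using $\alpha\leq\frac{2}{n-1}$. The only (cosmetic) difference is that the paper discards the nonnegative torsion terms before integrating by parts, whereas you carry them along and verify the combined coefficient of $\|\tau\|^2$ is nonnegative at the end, which gives a marginally more explicit inequality but is the same argument.
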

\begin{proof}
Using (\ref{id1}) and (\ref{id2}), we have
\[\begin{split}\pt_t\left(\int_X \|\Omega\|_\omega^\alpha\frac{\omega^n}{n!}\right)&=\int_X \frac{\|\Omega\|_{\omega}^{\frac{n-3}{n-1}+\alpha}}{2(n-1)}(\alpha-2) \left(R+\frac{2n-6}{n-2}\|\tau\|^2+\frac{1}{n-2}\|T\|^2\right)\frac{\omega^n}{n!}\\ &\leq\frac{\alpha-2}{2(n-1)}\int_X \|\Omega\|_\omega^{\frac{n-3}{n-1}+\alpha}R\frac{\omega^n}{n!}\\ &=\frac{\alpha-2}{n-1}\int_X \|\Omega\|_\omega^{\alpha-\frac{2}{n-1}}i\pt\bpt\log\|\Omega\|_\omega\wedge\left(\|\Omega\|_\omega\frac{\omega^{n-1}}{(n-1)!}\right)\\ &=-\frac{\alpha-2}{n-1}\left(\alpha-\frac{2}{n-1}\right)\int_X \|\Omega\|_\omega^{\alpha+\frac{n-3}{n-1}}\|\nabla'\log\|\Omega\|_\omega\|^2\frac{\omega^n}{n!}\\ &\leq 0.\end{split}\]
\end{proof}

As an application, we prove the following lower bound for $\| \Omega \|_\omega$.

\begin{cor}
Suppose $\omega_t$ is a family of Hermitian metrics satisfying the dual Anomaly flow (\ref{daf}) on $X \times [0,T]$ with conformally balanced initial data. Then
\[
\| \Omega \|_\omega(x,t) \geq \inf_X \| \Omega \|_{\omega_0}
\]
for all $(x,t) \in X \times [0,T]$.
\end{cor}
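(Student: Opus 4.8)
The plan is to read the lower bound off the monotonicity of $\mathcal{F}_\alpha$ by sending the exponent to $-\infty$, which turns the claim into a standard $L^p\to L^\infty$ limit against a fixed background measure. The starting observation is that the pointwise norm of the holomorphic volume form obeys $\|\Omega\|_\omega^2\,\frac{\omega^n}{n!}=c_n\,i^{n^2}\,\Omega\wedge\bar\Omega$ for a dimensional constant $c_n$, so that the measure $d\nu:=\|\Omega\|_\omega^2\,\frac{\omega^n}{n!}$ is \emph{independent of} $\omega$; on the compact manifold $X$ it is a fixed smooth positive measure of finite total mass and full support. (In the semi-flat model this is just the identity $\|\check\Omega\|_g^{-2}=\det(g_{ab})$ recorded earlier.) Rewriting the functional against $d\nu$ gives $\mathcal{F}_\alpha(\omega)=\int_X\|\Omega\|_\omega^{\,\alpha-2}\,d\nu$, i.e. a genuine integral of a power of $\|\Omega\|_\omega$ with respect to a time-independent measure.

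Next I would specialize the monotonicity theorem. Writing $\alpha=2-q$, the admissible range $\alpha\leq\frac{2}{n-1}$ becomes $q\geq\frac{2(n-2)}{n-1}$, which in particular permits all large $q$. For such $q$ the theorem gives that $t\mapsto\int_X\|\Omega\|_{\omega_t}^{-q}\,d\nu$ is nonincreasing, hence bounded above by its value at $t=0$. Raising both sides to the negative power $-1/q$ reverses the inequality, yielding $\big(\int_X\|\Omega\|_{\omega_t}^{-q}\,d\nu\big)^{-1/q}\geq\big(\int_X\|\Omega\|_{\omega_0}^{-q}\,d\nu\big)^{-1/q}$ for every fixed $t\in[0,T]$.

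Finally I would pass to the limit $q\to\infty$. Since $\omega_t$ is a genuine Hermitian metric, $\|\Omega\|_{\omega_t}$ is continuous and strictly positive on the compact $X$, so the standard fact $\big(\int_X f^{-q}\,d\nu\big)^{-1/q}\to\min_X f$ as $q\to\infty$ (for continuous $f>0$ and a finite measure $\nu$ of full support) applies to both sides; the fixed mass $\nu(X)$ contributes only a factor $\nu(X)^{-1/q}\to1$, identical on the two sides. Taking the limit gives $\min_X\|\Omega\|_{\omega_t}\geq\min_X\|\Omega\|_{\omega_0}=\inf_X\|\Omega\|_{\omega_0}$, which is exactly the asserted bound.

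The only genuinely analytic step is this $q\to\infty$ limit, and I expect it to be the main point to get right: one squeezes $\int_X\|\Omega\|^{-q}\,d\nu$ between $(\min\|\Omega\|)^{-q}\,\nu(U)$ for a small sublevel neighborhood $U$ of a minimizer and $(\min\|\Omega\|)^{-q}\,\nu(X)$, then takes $-1/q$ powers; continuity of $\|\Omega\|_{\omega_t}$ guarantees $\nu(U)>0$ so that both ends converge to $\min_X\|\Omega\|_{\omega_t}$ and the squeeze closes. As a cross-check I would note an alternative route that bypasses the functional entirely: using $R=2\Delta_c\log\|\Omega\|_\omega$, equation (\ref{id2}) rewrites as $\pt_t\log\|\Omega\|_\omega=\frac{\|\Omega\|_\omega^{(n-3)/(n-1)}}{n-1}\,\Delta_c\log\|\Omega\|_\omega+(\text{nonnegative})$, and the parabolic minimum principle (Hamilton's trick at a spatial minimum, where $\Delta_c\log\|\Omega\|_\omega\geq0$ and the coefficient is strictly positive) shows $\min_X\log\|\Omega\|_{\omega_t}$ is nondecreasing. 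I would lead with the functional argument, since it applies the monotonicity just established, and keep the minimum-principle computation as an independent confirmation.
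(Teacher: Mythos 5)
Your proposal is correct and is essentially the paper's own proof: the paper likewise applies the monotonicity of $\mathcal{F}_\alpha$ with negative exponent ($\alpha = -p$, $p>0$), takes the $1/p$-th power of the resulting inequality, and sends $p \to \infty$ to recover $\inf_X \| \Omega \|_{\omega_t} \geq \inf_X \| \Omega \|_{\omega_0}$ via the $L^p \to L^\infty$ limit. Your two refinements --- rewriting $\mathcal{F}_\alpha$ against the time-independent measure $\|\Omega\|_\omega^2\,\omega^n/n!$ so both sides of the comparison use the same fixed measure, and the minimum-principle cross-check via (\ref{id2}) with $R = 2\Delta_c \log \|\Omega\|_\omega$ --- are both valid but not needed, since at each fixed $t$ the paper's evolving measures $\omega_t^n/n!$ and $\omega_0^n/n!$ are finite with full support, which already suffices for the limit.
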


\begin{proof}
The previous theorem shows that
\[
{d \over dt} \left( \int_X \| \Omega \|^{-p}_\omega \, {\omega^n \over n!} \right)^{1/p} \leq 0
\]
for all $p >0$. Therefore, at each $t \in [0,T]$, we have the inequality
\[
\left( \int_X \| \Omega \|^{-p}_\omega \, {\omega^n \over n!} \right)^{1/p} (t) \leq \left( \int_X \| \Omega \|^{-p}_{\omega_0} \, {\omega_0^n \over n!} \right)^{1/p}
\]
for all $p>0$. Taking the limit as $p \rightarrow \infty$, we obtain the estimate.
\end{proof}

We note that this argument can be adapted to the usual Anomaly flow. In particular, we have a monotone functional.

\begin{thm}~\\
Suppose $\omega_t$ is a family of Hermitian metrics satisfying the Anomaly flow with zero slope (\ref{af}) with conformally balanced initial data. Then for any $\alpha>2$, the functional $\mathcal{F}_\alpha$ satisfies
\[
{d \over dt} \mathcal{F}_\alpha(\omega_t) \leq 0
\]
along the flow.
\end{thm}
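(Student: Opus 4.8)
The plan is to follow the same strategy as the two preceding monotonicity results, reducing everything to the evolution of $\|\Omega\|_\omega$ and then integrating by parts against the conformally balanced condition. First I would record the one computation common to any flow of the form $\pt_t(\|\Omega\|_\omega\omega^{n-1})=\Phi$: since $\|\Omega\|_\omega^2\,\omega^n$ is a fixed multiple of $i^{n^2}\Omega\wedge\bar\Omega$ and hence metric-independent, differentiating gives $\pt_t(\omega^n)=-2\,\|\Omega\|_\omega^{-1}(\pt_t\|\Omega\|_\omega)\,\omega^n$. Substituting into the $t$-derivative of $\mathcal{F}_\alpha$ yields
\[
\frac{d}{dt}\mathcal{F}_\alpha(\omega_t)=(\alpha-2)\int_X \|\Omega\|_\omega^{\alpha-1}\,(\pt_t\|\Omega\|_\omega)\,\frac{\omega^n}{n!}.
\]
The prefactor $(\alpha-2)$ already explains the dichotomy in the hypotheses of the three theorems: the monotonicity for the Anomaly flow must have the opposite sign to that for the dual flow, and $\alpha=2$ is the critical exponent.

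Next I would compute $\pt_t\|\Omega\|_\omega$ along (\ref{af}) directly, rather than passing through the full curvature expansion. Wedging (\ref{af}) with $\omega$ and using the universal relation $\pt_t\|\Omega\|_\omega=-\tfrac12\|\Omega\|_\omega\,\tr\dot\omega$ gives the pointwise identity
\[
(\pt_t\|\Omega\|_\omega)\,\frac{\omega^n}{n!}=-\frac{1}{(n-2)(n-1)!}\,\omega\wedge i\pt\bpt\omega^{n-2},
\]
the Anomaly-flow counterpart of (\ref{id1})--(\ref{id2}). Thus the entire problem reduces to proving that $\int_X \|\Omega\|_\omega^{\alpha-1}\,\omega\wedge i\pt\bpt\omega^{n-2}\ge 0$ for $\alpha>2$, after which the minus sign above and $(\alpha-2)>0$ force $\frac{d}{dt}\mathcal{F}_\alpha\le 0$.

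The core of the argument is then an integration by parts. Moving $i\pt\bpt$ onto $\|\Omega\|_\omega^{\alpha-1}\omega$ and expanding by the Leibniz rule produces a term in $i\pt\bpt\|\Omega\|_\omega^{\alpha-1}$, a term in $\|\Omega\|_\omega^{\alpha-1}\,i\pt\bpt\omega$, and the cross terms $\pt\|\Omega\|_\omega^{\alpha-1}\wedge\bpt\omega$ and $\bpt\|\Omega\|_\omega^{\alpha-1}\wedge\pt\omega$. I would eliminate every bare torsion factor using the differentiated conformally balanced identity
\[
\pt\omega\wedge\omega^{n-2}=-\frac{1}{n-1}\,\pt\log\|\Omega\|_\omega\wedge\omega^{n-1},
\]
together with its conjugate, which converts all of these into gradient integrals of $\|\nabla'\log\|\Omega\|_\omega\|^2$ plus the single genuinely irreducible term $\int_X\|\Omega\|_\omega^{\alpha-1}\,C\wedge\omega^{n-3}$ with $C=iT\wedge\bar T$. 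For that term I would invoke the excerpt's computation $C\wedge\omega^{n-3}=\tfrac{1}{(n-2)(n-1)n}(\|\tau\|^2-\tfrac12\|T\|^2)\,\omega^n$ and the conformally balanced identity $\|\tau\|^2=\|\nabla'\log\|\Omega\|_\omega\|^2$ to merge the $\|\tau\|^2$ contribution with the gradient integrals. Assembling coefficients, I expect the manifestly signed form
\[
\int_X \|\Omega\|_\omega^{\alpha-1}\,\omega\wedge i\pt\bpt\omega^{n-2}=\frac{(\alpha-1)(n-2)-1}{n(n-1)}\int_X \|\Omega\|_\omega^{\alpha-1}\|\nabla'\log\|\Omega\|_\omega\|^2\,\omega^n+\frac{1}{2n(n-1)}\int_X \|\Omega\|_\omega^{\alpha-1}\|T\|^2\,\omega^n.
\]

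On the range $\alpha>2$ both coefficients are nonnegative: the $\|T\|^2$ coefficient is always positive, while $(\alpha-1)(n-2)-1$ already equals $n-3\ge 0$ at $\alpha=2$ and increases in $\alpha$, so it is positive for $\alpha>2$ and all $n\ge 3$. Hence the integral is nonnegative and $\frac{d}{dt}\mathcal{F}_\alpha\le 0$, with the threshold $\alpha>2$ forced by the sign of the prefactor $(\alpha-2)$. The main obstacle I anticipate is the bookkeeping in the integration by parts: one must verify that the several torsion cross-terms genuinely reassemble, through the conformally balanced identities, into one sign-definite combination, and check that the gradient coefficient degenerates exactly at $\alpha=2$ when $n=3$, so that the admissible exponent range is precisely $\alpha>2$ and not larger.
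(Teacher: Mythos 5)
Your proposal is correct, and the identity you only ``expect'' at the end is in fact exactly right --- it can be confirmed both by direct computation and by checking it against the trace formulas the paper uses. The paper's own proof takes a related but less self-contained route: it quotes the full metric evolution equations of the Anomaly flow from \cite{phong2018c} (with separate formulas for $n=3$ and $n\geq 4$), traces them to obtain $\pt_t\|\Omega\|_\omega=\tfrac{1}{2(n-1)}\bigl(R-\tfrac{1}{n-2}\|T\|^2-\tfrac{2(n-3)}{n-2}\|\tau\|^2\bigr)$, discards the torsion terms by an inequality (this is where $\alpha>2$ enters), and integrates the remaining curvature term by parts against the closed form $\|\Omega\|_\omega\omega^{n-1}$, landing on $-\tfrac{(\alpha-2)^2}{n-1}\int_X\|\Omega\|_\omega^{\alpha-1}\|\nabla'\log\|\Omega\|_\omega\|^2\tfrac{\omega^n}{n!}\leq 0$. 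Your derivation buys three things: it never invokes the curvature evolution equations, since your pointwise identity $\pt_t\|\Omega\|_\omega\,\tfrac{\omega^n}{n!}=-\tfrac{1}{(n-2)(n-1)!}\,\omega\wedge i\pt\bpt\omega^{n-2}$ (obtained by wedging the flow with $\omega$ and using $\pt_t\|\Omega\|_\omega=-\tfrac12\|\Omega\|_\omega\tr\dot\omega$) is correct and replaces them; it handles $n=3$ and $n\geq 4$ uniformly, where the paper needs two cases; and it produces an exact sign-definite identity rather than an inequality. Moreover, the bookkeeping you flag as the main obstacle is lighter than you anticipate: there is no need to move the full $i\pt\bpt$ onto $\|\Omega\|_\omega^{\alpha-1}\omega$, which would generate the $i\pt\bpt\|\Omega\|_\omega^{\alpha-1}$ and $i\pt\bpt\omega$ terms you list. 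A single application of Stokes moving only $\pt$ (legitimate at each time because conformal balance is preserved along (\ref{af}), the right-hand side being $d$-closed) gives
\[
\int_X \|\Omega\|_\omega^{\alpha-1}\,\omega\wedge i\pt\bpt\omega^{n-2}
=-(n-2)\int_X i\,\pt\bigl(\|\Omega\|_\omega^{\alpha-1}\bigr)\wedge\bpt\omega\wedge\omega^{n-2}
-(n-2)\int_X\|\Omega\|_\omega^{\alpha-1}\,C\wedge\omega^{n-3},
\]
with $C=i\pt\omega\wedge\bpt\omega=iT\wedge\bar{T}$. The first term reduces through the conjugate balanced identity $\bpt\omega\wedge\omega^{n-2}=-\tfrac{1}{n-1}\bpt\log\|\Omega\|_\omega\wedge\omega^{n-1}$ to $\tfrac{(n-2)(\alpha-1)}{n(n-1)}\int_X\|\Omega\|_\omega^{\alpha-1}\|\nabla'\log\|\Omega\|_\omega\|^2\,\omega^n$, and the second through $C\wedge\omega^{n-3}=\tfrac{1}{n(n-1)(n-2)}\bigl(\|\tau\|^2-\tfrac12\|T\|^2\bigr)\omega^n$ together with $\|\tau\|^2=\|\nabla'\log\|\Omega\|_\omega\|^2$; adding them reproduces precisely your coefficients $\tfrac{(\alpha-1)(n-2)-1}{n(n-1)}$ and $\tfrac{1}{2n(n-1)}$, which agree with the paper's trace formulas via $(n-2)(\alpha-1)-1=(n-2)(\alpha-2)+(n-3)$. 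With the prefactor $-(\alpha-2)/\bigl((n-2)(n-1)!\bigr)$ this yields ${d\over dt}\mathcal{F}_\alpha\leq 0$ for $\alpha>2$, and your observation that the gradient coefficient vanishes exactly at $\alpha=2$, $n=3$ is also correct.
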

\begin{proof}
The evolution of Anomaly flow with zero slope is \cite{phong2018c}
\[\begin{split} \partial_t g_{\bar{k} j} &= \frac{1}{(n-1)\| \Omega \|_\omega} \bigg[ - \tilde{R}_{\bar{k} j} + {1 \over 2(n-2)} (\| T \|^2 - 2 \| \tau \|^2) g_{\bar{k} j}\\
& -\frac{1}{2} g^{q \bar{p}} g^{s \bar{r}} T_{\bar{k} q s} \bar{T}_{j \bar{p} \bar{r}} + g^{s \bar{r}} (T_{\bar{k} js} \bar{\tau}_{\bar{r}} + \tau_s \bar{T}_{j \bar{k} \bar{r}}) + \tau_j \bar{\tau}_{\bar{k}} ) \bigg],
\end{split}\]
when $n \geq 4$, and
\[
\partial_t g_{\bar{k} j} = \frac{1}{2 \| \Omega \|_\omega} \bigg[ - \tilde{R}_{\bar{k} j} + g^{m \bar{\ell}} g^{s \bar{r}} T_{\bar{r} mj} \bar{T}_{s \bar{\ell} \bar{k}} \bigg]
\]
when $n =3$. Taking the trace in either case gives
\[
\dot{\omega} \wedge \frac{\omega^{n-1}}{(n-1)!} = \frac{1}{(n-1)\| \Omega \|_\omega} \bigg[ -R + \frac{1}{(n-2)} \| T \|^2 + \frac{2(n-3)}{n-2} \| \tau \|^2 \bigg] \frac{\omega^n}{n!},
\]
and thus
\[
\partial_t \| \Omega \|_\omega = \frac{1}{2(n-1)} \bigg[ R - \frac{1}{(n-2)} \| T \|^2 - \frac{2(n-3)}{n-2} \| \tau \|^2 \bigg].
\]
Therefore
\[\begin{split}\pt_t\left(\int_X \|\Omega\|_\omega^\alpha\frac{\omega^n}{n!}\right)&=\int_X \frac{\|\Omega\|_{\omega}^{\alpha-1}}{2(n-1)}(\alpha-2) \left(R - \frac{1}{n-2} \| T \|^2 - \frac{2(n-3)}{n-2} \| \tau \|^2 \right)\frac{\omega^n}{n!}\\ &\leq\frac{\alpha-2}{2(n-1)}\int_X \|\Omega\|_\omega^{\alpha -1 }R\frac{\omega^n}{n!}\\ &=\frac{\alpha-2}{n-1}\int\|\Omega\|_\omega^{\alpha-2}i\pt\bpt\log\|\Omega\|_\omega\wedge\left(\|\Omega\|_\omega\frac{\omega^{n-1}}{(n-1)!}\right)\\ &=-\frac{(\alpha-2)^2}{n-1}\int\|\Omega\|_\omega^{\alpha-1}\|\nabla'\log\|\Omega\|_\omega\|^2\frac{\omega^n}{n!}\\ &\leq 0.\end{split}\]
\end{proof}

As in the case of the dual Anomaly flow, this monotone quantity gives the following estimate for the dilaton $\| \Omega \|_\omega$.

\begin{cor}
Suppose $\omega_t$ is a family of Hermitian metrics satisfying the Anomaly flow with zero slope (\ref{af}) on $X \times [0,T]$ with conformally balanced initial data. Then
\[
\| \Omega \|_\omega(x,t) \leq \sup_X \| \Omega \|_{\omega_0}
\]
for all $(x,t) \in X \times [0,T]$.
\end{cor}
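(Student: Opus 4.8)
The plan is to mirror, essentially verbatim, the argument already used for the lower bound along the dual Anomaly flow: the theorem immediately preceding this corollary establishes that $\mathcal{F}_\alpha(\omega_t)$ is monotonically nonincreasing along the Anomaly flow with zero slope precisely for $\alpha > 2$, and a supremum bound will be extracted by pushing $\alpha \to +\infty$. Since $X$ is compact and each $\omega_t$ is a genuine Hermitian metric, $\|\Omega\|_{\omega_t}$ is a positive continuous function and $\frac{\omega_t^n}{n!}$ is a smooth positive volume form, so $\mathcal{F}_\alpha(\omega_t)$ is a finite positive quantity for every $\alpha$.

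First I would record that for each fixed $p > 2$ the theorem gives $\frac{d}{dt}\mathcal{F}_p(\omega_t) \leq 0$. Because $x \mapsto x^{1/p}$ is increasing on $(0,\infty)$ and $\mathcal{F}_p(\omega_t) > 0$, the composite $\left(\mathcal{F}_p(\omega_t)\right)^{1/p}$ is also nonincreasing in $t$. Evaluating at times $0$ and $t$ therefore yields
\[
\left( \int_X \| \Omega \|_{\omega_t}^{p} \, \frac{\omega_t^n}{n!} \right)^{1/p} \leq \left( \int_X \| \Omega \|_{\omega_0}^{p} \, \frac{\omega_0^n}{n!} \right)^{1/p}
\]
for every $p > 2$ and every $t \in [0,T]$.

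The second step is to let $p \to \infty$. For a continuous positive function $g$ on a compact manifold and any smooth positive volume form $d\mu$ of finite total mass and full support, one has $\left( \int_X g^p \, d\mu \right)^{1/p} \to \sup_X g$: the upper bound follows from $\left(\int_X g^p \, d\mu\right)^{1/p} \leq (\sup_X g)\,\mu(X)^{1/p}$, and the matching lower bound from integrating over the set where $g$ exceeds $\sup_X g - \varepsilon$, which has positive measure since $\mu$ has full support. Applying this at time $t$ with $g = \| \Omega \|_{\omega_t}$, $d\mu = \frac{\omega_t^n}{n!}$, and at time $0$ with $g = \| \Omega \|_{\omega_0}$, $d\mu = \frac{\omega_0^n}{n!}$, the displayed inequality passes to the limit and gives $\sup_X \| \Omega \|_{\omega_t} \leq \sup_X \| \Omega \|_{\omega_0}$, which is the claimed pointwise bound.

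The one point that deserves care — and the only place the argument could look suspicious — is that the reference measure $\frac{\omega_t^n}{n!}$ itself evolves with $t$, so these are not $L^p$ norms against a single fixed measure. The resolution is exactly the content of the $L^p \to L^\infty$ limit above: the limit recovers the pointwise supremum of the continuous function $\|\Omega\|_{\omega_t}$ and is completely insensitive to which smooth positive finite-mass volume form is used (full support guarantees the essential supremum equals the genuine supremum), so the time dependence of the volume form is harmless and washes out in the limit. I would also flag that the invocation of the monotonicity theorem is legitimate only for $p > 2$, but this is precisely the range in which $\mathcal{F}_p$ is proven monotone, and it is all that the $p \to \infty$ limit requires.
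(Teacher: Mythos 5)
Your proposal is correct and is essentially the paper's own argument: the paper proves the analogous lower bound for the dual Anomaly flow by taking $p$-th roots of the monotone functional and letting $p \to \infty$, and states this corollary as following ``as in the case of the dual Anomaly flow,'' which is exactly what you carry out using $\mathcal{F}_p$ for $p>2$. Your explicit treatment of the $L^p \to L^\infty$ limit with the time-dependent volume form $\frac{\omega_t^n}{n!}$ is a worthwhile detail that the paper leaves implicit, but it is not a different route.
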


\section{Examples} \label{section-examples}

\subsection{Conformally K\"ahler metrics} \label{section-conf-kahler}
As our first class of examples, we consider K\"ahler $n$-folds $(X^n,\Omega)$ and study the dual Anomaly flow for conformally K\"ahler initial data. In this case, the initial data satisfies $\omega_0 = e^f \chi$ for some K\"ahler metric $\chi$ and smooth function $f: X \rightarrow \mathbb{R}$. Combined with the conformally balanced condition (\ref{confbal}), this implies
\[
e^{(n-2)f} = C_0 \| \Omega \|^{-2}_\chi,
\]
for some constant $C_0$, and furthermore
\[
\| \Omega \|_{\omega(0)} \omega(0)^{n-1} = \alpha_0^{n-1},
\]
where $\alpha_0 = C_0^{1/2(n-1)} \chi$ is a K\"ahler metric. We study the ansatz
\[
\| \Omega \|_{\omega} \omega^{n-1} = \alpha_u^{n-1}, \ \ \ \alpha_u = \alpha_0 + i \partial \bar{\partial} u > 0,
\]
parameterized by a potential function $u(x,t)$. Solving for $\omega$ in terms of $\alpha_u$ (see e.g. \cite{phong2018c}), this ansatz can be expressed as
\[
\omega = \| \Omega \|_{\alpha_u}^{-2/(n-2)} \alpha_u,
\]
with
\[
\| \Omega \|_{\omega} = (\| \Omega \|_{\alpha_u})^{2(n-1)/(n-2)}.
\]
Substituting this ansatz into the dual Anomaly flow gives
\[
(n-1) \dot{\alpha}_{u} \wedge \alpha_{u}^{n-2} = - i \partial \bar{\partial} (\| \Omega \|_{\alpha_u}^{4} \| \Omega \|_{\alpha_u}^{-2} \alpha_u^{n-2} ),
\]
which reduces to
\[
i \partial \bar{\partial} \left[ (n-1) \dot{u} + \| \Omega \|_{\alpha_u}^2 \right] \wedge \alpha_u^{n-2} =0.
\]
Therefore a solution to the dual Anomaly flow can be produced by solving
\[
{d \over dt} u = - {1 \over (n-1)} \| \Omega \|_{\alpha_u}^2.
\]
This flow has been studied by Cao-Keller \cite{cao2013}, where it was named the $\Omega$-K\"ahler flow. We may also write this flow in the notation of Collins-Hisamoto-Takahashi \cite{collins2017}. Define $\rho_0 = \log \| \Omega \|_{\alpha_0}^2 - \log (n-1)$. Then the equation for $u(x,t)$ becomes the following inverse Monge-Amp\`ere flow
\[
{d \over dt} u = - {\alpha_0^n \over \alpha_u^n} e^{\rho_0}.
\]
Let $V = \int_X \alpha_0^n$ and consider
\[
v = u - E(u),
\]
where $E$ is the Aubin-Yau energy functional, defined by the variational formula
\[
\delta E(u) = {1 \over V} \int_X (\delta u) \, \alpha_u^n.
\]
Then $v$ satisfies
\[
{d \over dt} v = {1 \over V} \int_X e^{\rho_0} \alpha_0^n  - {\alpha_0^n \over \alpha_v^n} e^{\rho_0}.
\]
Let
\[
c = - \log {1 \over V} \int_X e^{\rho_0} \alpha_0^n,
\]
and reparameterize time such that
\[
\varphi(x,t) = v(x, e^c t).
\]
Then $\varphi$ satisfies
\begin{equation} \label{ma-1}
{d \over dt} \varphi = 1 - {\alpha_0^n \over \alpha_{\varphi}^n} e^{\rho_0 + c}.
\end{equation}
This is exactly the $MA^{-1}$-flow introduced by Collins-Hisamoto-Takahashi with $\lambda=0$. The $MA^{-1}$-flow is in part motivated by its interpretation as the gradient flow of the Ding functional. It is interesting that in this reduction, the dual Anomaly flow is a gradient flow. We do not know whether such a gradient flow interpretation exist for more general initial data.

By the results of \cite{cao2013,collins2017,fang2011}, the flow (\ref{ma-1}) for $\varphi$ exists for all time and converges to $\varphi_\infty$ such that $\alpha_{\varphi_\infty}$ is the unique K\"ahler Ricci-flat metric \cite{yau1978} in the class $[\alpha_0]$. Therefore the dual Anomaly flow with conformally K\"ahler initial data exists for all time and converges to a limiting K\"ahler Ricci-flat metric $\omega_\infty$ satisfying
\[
\| \Omega \|_{\omega_\infty} \omega_\infty^{n-1} = (\alpha_0 + i \partial \bar{\partial} \varphi_\infty)^{n-1}.
\]
In \cite{phong2018c}, the Anomaly flow (\ref{af}) is studied with conformally K\"ahler data, and though the ansatz reduces to a different PDE, the flow also exists for all time and converges to a K\"ahler Ricci-flat metric.

\subsection{Generalized Calabi-Gray manifolds} \label{section-gcg}

The Calabi-Gray manifold is a class of non-K\"ahler Calabi-Yau 3-folds first constructed by Calabi \cite{calabi1958} and Gray \cite{gray1969} using the vector cross product in 7-dimensional Euclidean space. In \cite{fei2016,fei2016b}, this construction was further generalized by the first-named author from the viewpoint of twistor spaces.

Roughly speaking, generalized Calabi-Gray manifolds are total spaces of holomorphic fibrations with hyperk\"ahler fibers over a Riemann surface $\Sigma$ equipped with a holomorphic map $\varphi$ to $\ccc\pp^1$ satisfying a certain condition. More details can be found in \cite{fei2017,fei2018b}.

As in \cite{fei2017}, we may take a special ansatz
\[\omega=e^{2f}\hat{\omega}+e^f\omega',\]
where $\hat{\omega}$ is a fixed canonical metric on $\Sigma$, $\omega'$ is a fiberwise Calabi-Yau metric on the hyperk\"ahler fibers and $f:\Sigma\to\rr$ is an arbitrary function on $\Sigma$.

By the calculation of \cite{fei2017}, we have
\[\|\Omega\|_\omega\omega^2=2e^f \hat{\omega} \wedge\omega'+\omega'^2\]
and
\[-i\pt\bpt\left(\|\Omega\|_\omega\omega\right)=-i\pt\bpt\left(e^{-f}\omega'\right) = - i \pt \bpt e^{-f} \wedge \omega' + \kappa e^{-f} \hat{\omega} \wedge \omega',\]
and therefore the dual Anomaly flow reduces to
\[\pt_t u=\frac{u^2}{4}(\Delta u-2\kappa u),\]
where $u=e^{-f}$, $\Delta$ and $\kappa\leq0$ are the Laplace operator and Gauss curvature associated to the metric $\hat{\omega}$.

This is a parabolic equation. By the maximum principle, given that $u_0=e^{-f_0}>0$, we know that $u$ stays positive. Moreover, it is easy to see that
\[-2\pt_t\int u^{-1}=\int(-\kappa)u\geq \frac{C}{\int u^{-1}}.\]
It follows that
\[\pt_t\left(\int u^{-1}\right)^2\leq -C,\]
therefore finite-time singularity always occur. Moreover, just as in \cite{fei2017d}, one can show that
\[\int u^{-1}(\alpha,\beta,\gamma):=V_0\in\rr^3\]
is a constant vector, where $\alpha,\beta,\gamma$ are components of the holomorphic map $\varphi:\Sigma\to\ccc\pp^1=S^2\subset\rr^3$ with $\alpha^2+\beta^2+\gamma^2=1$. Suppose $V_0\neq 0$, then we have the estimate
\[\int u^{-1}\geq \int u^{-1}(\alpha,\beta,\gamma)\cdot\frac{V_0}{|V_0|}=|V_0|>0.\]
This estimate shows that before $\int u^{-1}$ decreases to 0 singularities have already occurred.

\subsection{Iwasawa manifold}
Consider the quotient $X$ of $\mathbb{C}^3$ by the action
\[
(x,y,z) \mapsto (x+a, y+b, z + \bar{a} y + c),
\]
where $a,b,c \in \mathbb{Z}[i]$. The manifold $X$ is a Calabi-Yau threefold, as
\[
\Omega = dz \wedge dx \wedge dy,
\]
is a well-defined holomorphic nowhere vanishing $(3,0)$ form. We define $\theta \in \Omega^{1,0}(X)$ by
\[
\theta = dz - \bar{x} dy,
\]
which is also well-defined on $X$. The projection $(x,y,z) \mapsto (x,y)$ defines a map $\pi: X \rightarrow T^4$ with $T^2$ fibers. On the base $T^4$, there is the flat metric
\[
\hat{\omega} = i d x \wedge d \bar{x} + i dy \wedge d \bar{y},
\]
and for any function $u: T^4 \rightarrow \mathbb{R}$, we will consider the ansatz metric
\[
\omega = \pi^* (e^u \hat{\omega}) + i \theta \wedge \bar{\theta}.
\]
This is a special case of the Fu-Yau ansatz metric \cite{fu2008}. Let us check whether this ansatz is preserved by the dual Anomaly flow.

A direction computation gives
\[
\| \Omega \|_\omega = e^{-u}, \ \ \| \Omega \|_\omega \omega^2 = e^{u} \hat{\omega}^2 + 2 i \theta \wedge \bar{\theta} \wedge \hat{\omega}.
\]
The dual Anomaly flow becomes
\[
{d \over dt} (e^u \hat{\omega}^2) = - i \partial \bar{\partial} (\hat{\omega} + e^{-u} i \theta \wedge \bar{\theta}).
\]
This ansatz is not preserved unless $e^{u(t)}$ is a constant at each time. In this case,
\[
{d \over dt} e^{u(t)} \hat{\omega}^2 = - e^{-u(t)} {\hat{\omega}^2 \over 2}.
\]
This is the ODE
\[
\dot{u} = - {e^{-2u} \over 2},
\]
which has solution
\[
e^{u}  = (R - t)^{1/2}.
\]
Thus the dual Anomaly flow has solution
\[
\omega(t) = (R - t)^{1/2} \hat{\omega} + i \theta \wedge \bar{\theta},
\]
and the base $T^4$ shrinks to zero in finite time.

We note that unlike the Anomaly flow \cite{phong2018d}, a similar computation shows that the Fu-Yau ansatz is not preserved by the dual Anomaly flow for general Goldstein-Prokushkin threefolds \cite{goldstein2004}.

\subsection{Product fibrations}
Next, we consider the product $X = T^4 \times T^2$ with holomorphic volume form
\[
\Omega = dz \wedge dx \wedge dy,
\]
where $(x,y)$ are holomorphic coordinates on $T^4$ and $z$ is a holomorphic coordinate on $T^2$, and ansatz metric
\[
\omega =  e^{2\psi} \omega_{T^2} + e^{\psi} \omega',
\]
where $\psi: T^2 \rightarrow \mathbb{R}$ is a smooth function and
\[
\omega' = i dx \wedge d \bar{x} + i dy \wedge d \bar{y}, \ \ \ \omega_{T^2} = i dz \wedge d \bar{z}.
\]
As $\omega$ is conformally K\"ahler, this is a special case of our earlier discussion in \S \ref{section-conf-kahler}. We include this example because the metric ansatz is a simple instance of the ansatz introduced in \cite{fei2016,fei2016b} and discussed in \S \ref{section-gcg}. It is easy to derive the reduction of the dual Anomaly flow in this setup. Indeed,
\[
\| \Omega \|_\omega = e^{- 2 \psi}
\]
\[
\| \Omega \|_\omega \omega^2 = \omega'^2 + 2 e^{\psi} \omega' \wedge \omega_{T^2},
\]
and the dual Anomaly flow is
\[
{d \over dt} (\omega'^2 + 2 e^{\psi} \omega' \wedge \omega_{T^2}) = - i \partial \bar{\partial} (e^{-\psi} \omega' + \omega_{T^2}).
\]
The flow reduces to the Riemann surface $(T^2, \omega_{T^2})$ as the PDE
\[
\partial_t u =  {u^2 \over 4} \Delta u, \ \ \ u = e^{- \psi}.
\]
The maximum principle gives uniform bounds for $u$ in terms of the initial data
\[
\inf_{T^2} u(x,0) \leq u(x,t) \leq \sup_{T^2} u(x,0).
\]
As a parabolic equation with bounded coefficients, it follows \cite{krylov1980} that $C^{\delta,\delta/2}$ norms of $u$ are bounded, and therefore by Schauder estimates (e.g. \cite{krylov1996}), the $C^{2+\delta,1+\delta/2}$ norms of $u(x,t)$ are bounded along the flow. By a bootstrap argument, the flow exists smoothly for all time with all norms uniformly bounded. Furthermore, we have the conservation law
\begin{equation} \label{product-conservation}
{d \over dt} \int_{T^2} e^{\psi} \omega_{T^2} = {d \over dt} \int_{T^2} {1 \over u} \omega_{T^2}= 0.
\end{equation}
Stationary points of the flow satisfy $\Delta u_\infty =0$ and hence are constant functions. The Dirichlet energy
\[
I(u(t)) = {1 \over 2} \int_{T^2} i \partial u \wedge \bar{\partial} u,
\]
evolves by
\[
{d \over dt} I(t) = - \int_{T^2} \dot{u} \, i \partial \bar{\partial} u = - 2 \int_{T^2} u^{-2} \dot{u}^2 \, \omega_{T^2}.
\]
Thus
\[
{d \over dt} I(t) \leq -{2 \over \sup u^2(x,0)} \int_{T^2} \dot{u}^2 \, \omega_{T^2}.
\]
From here, our uniform estimates on all norms of $u$ show that if $\int \dot{u}^2$ does not go to zero as $t \rightarrow \infty$, then $I(t)$ becomes unbounded, which contradicts our uniform bounds. A standard argument shows that $u(t)$ converges smoothly to a constant as $t \rightarrow \infty$. This constant is detected by the conservation law (\ref{product-conservation}).

\subsection{Quotients of $SL(2,{\bf C})$}
Let $X$ be a threefold admitting a global holomorphic frame $\{ e_1, e_2, e_3 \}$ of $T^{1,0} X$ such that $[e_i,e_j] = \epsilon_{ijk} e_k$, where $\epsilon_{ijk}$ is the Levi-Civita symbol. Such a manifold can be constructed by taking lattice quotients of the complex Lie group $SL(2,{\bf C})$. If $\{e^1,e^2,e^3 \}$ is the dual frame, then we set
\[
\Omega = e^1 \wedge e^2 \wedge e^3
\]
to be the holomorphic volume form, and
\[
\hat{\omega} = i e^1 \wedge \bar{e}^1 + i e^2 \wedge \bar{e}^2 + i e^3 \wedge \bar{e}^3
\]
to be the reference Hermitian metric. We will study the flow using the ansatz
\[
\omega(t) = \rho(t) \hat{\omega},
\]
where $\rho(t)>0$ is a positive constant. These metrics are in fact conformally balanced, and this ansatz was used in \cite{fei2015} to solve the Hull-Strominger system on complex Lie groups. We have
\[
\| \Omega \|_\omega = \rho^{-3/2},
\]
and furthermore, a computation using the structure constants $\epsilon_{ijk}$ gives
\[
i \partial \bar{\partial} \hat{\omega} = {1 \over 2} \hat{\omega}^2.
\]
Substituting this ansatz into the dual Anomaly flow leads to the ODE
\[
{d \over dt} (\rho^{1/2} \hat{\omega}^2) = - {1 \over 2} \rho^{-1/2} \hat{\omega}^2.
\]
This has solution $\rho(t)^{1/2} = (R - t)^{1/2}$, hence the dual Anomaly flow has solution
\[
\omega(t) = (R-t) \hat{\omega},
\]
and the metric goes to zero everywhere in finite-time.

\bibliographystyle{alpha}

\bibliography{C:/Users/Piojo/Dropbox/Documents/Source}

\begin{thebibliography}{GFRST18}

\bibitem[BH15]{baraglia2015}
D.~Baraglia and P.~Hekmati.
\newblock Transitive {C}ourant algebroids, string structures and {T}-duality.
\newblock {\em Advances in Theoretical and Mathematical Physics},
  19(3):613--672, 2015.

\bibitem[Cal58]{calabi1958}
E.~Calabi.
\newblock Construction and properties of some 6-dimensional almost complex
  manifolds.
\newblock {\em Transactions of the American Mathematical Society},
  87(2):407--438, 1958.

\bibitem[CHT17]{collins2017}
T.C. Collins, T.~Hisamoto, and R.~Takahashi.
\newblock The inverse {M}onge-{A}mp\`ere flow and applications to
  {K}\"ahler-{E}instein metrics.
\newblock {\em arXiv: 1712.01685}, 2017.

\bibitem[CK13]{cao2013}
H.-D. Cao and J.~Keller.
\newblock About the {C}alabi problem: a finite-dimensional approach.
\newblock {\em Journal of the European Mathematical Society}, 15(3):1033--1065,
  2013.

\bibitem[CS17]{collins2017b}
T.C. Collins and G.~Sz\'ekelyhidi.
\newblock Convergence of the {J}-flow on toric manifolds.
\newblock {\em Journal of Differential Geometry}, 107(1):47--81, 2017.

\bibitem[dlOS14]{ossa2014}
X.C. de~la Ossa and E.E. Svanes.
\newblock Holomorphic bundles and the moduli space of ${N}=1$ supersymmetric
  heterotic compactifications.
\newblock {\em Journal of High Energy Physics}, 2014(10):1--55, 2014.

\bibitem[Fei16a]{fei2016}
T.~Fei.
\newblock A construction of non-{K}\"ahler {C}alabi-{Y}au manifolds and new
  solutions to the {S}trominger system.
\newblock {\em Advances in Mathematics}, 302:529--550, 2016.

\bibitem[Fei16b]{fei2016b}
T.~Fei.
\newblock {\em On the geometry of the {S}trominger system}.
\newblock PhD thesis, Massachusetts Institute of Technology, 2016.

\bibitem[Fei18]{fei2018b}
T.~Fei.
\newblock Generalized {C}alabi-{G}ray geometry and heterotic superstrings.
\newblock {\em arXiv: 1807.08737}, 2018.

\bibitem[FHP17a]{fei2017d}
T.~Fei, Z.-J. Huang, and S.~Picard.
\newblock The {A}nomaly flow over {R}iemann surfaces.
\newblock {\em arXiv: 1711.08186}, 2017.

\bibitem[FHP17b]{fei2017}
T.~Fei, Z.-J. Huang, and S.~Picard.
\newblock A construction of infinitely many solutions to the {S}trominger
  system.
\newblock {\em arXiv: 1703.10067}, 2017.

\bibitem[FLM11]{fang2011}
H.~Fang, M.-J. Lai, and X.-N. Ma.
\newblock On a class of fully nonlinear flows in {K}\"ahler geometry.
\newblock {\em Journal f\"ur die reine und angewandte Mathematik},
  2011(653):189--220, 2011.

\bibitem[FLY12]{fu2012}
J.-X. Fu, J.~Li, and S.-T. Yau.
\newblock Balanced metrics on non-{K}\"ahler {C}alabi-{Y}au threefolds.
\newblock {\em Journal of Differential Geometry}, 90(1):81--129, 2012.

\bibitem[FWW10]{fu2010}
J.-X. Fu, Z.-Z. Wang, and D.-M. Wu.
\newblock Form-type {C}alabi-{Y}au equations.
\newblock {\em Mathematical Research Letters}, 17(5):887--903, 2010.

\bibitem[FX14]{fu2014}
J.-X. Fu and J.~Xiao.
\newblock Relations between the {K}\"ahler cone and the balanced cone of a
  {K}\"ahler manifold.
\newblock {\em Advances in Mathematics}, 263:230--252, 2014.

\bibitem[FY08]{fu2008}
J.-X. Fu and S.-T. Yau.
\newblock The theory of superstring with flux on non-{K}\"ahler manifolds and
  the complex {M}onge-{A}mp\`ere equation.
\newblock {\em Journal of Differential Geometry}, 78(3):369--428, 2008.

\bibitem[FY15]{fei2015}
T.~Fei and S.-T. Yau.
\newblock Invariant solutions to the {S}trominger system on complex {L}ie
  groups and their quotients.
\newblock {\em Communications in Mathematical Physics}, 338(3):1--13, 2015.

\bibitem[GF16]{garciafernandez2016}
M.~Garcia-Fernandez.
\newblock Ricci flow, {K}illing spinors, and {T}-duality in generalized
  geometry.
\newblock {\em arXiv: 1611.08926}, 2016.

\bibitem[GF18]{garciafernandez2018b}
M.~Garcia-Fernandez.
\newblock T-dual solutions of the {H}ull-{S}trominger system on non-{K}\"ahler
  threefolds.
\newblock {\em arXiv: 1810.04740}, 2018.

\bibitem[GFRST18]{garciafernandez2018}
M.~Garcia-Fernandez, R.~Rubio, C.~Shahbazi, and C.~Tipler.
\newblock Canonical metrics on holomorphic {C}ourant algebroids.
\newblock {\em arXiv: 1803.01873}, 2018.

\bibitem[GP04]{goldstein2004}
E.~Goldstein and S.~Prokushkin.
\newblock Geometric model for complex non-{K}\"ahler manifolds with ${SU}(3)$
  structure.
\newblock {\em Communications in Mathematical Physics}, 251(1):65--78, 2004.

\bibitem[Gra69]{gray1969}
A.~Gray.
\newblock Vector cross products on manifolds.
\newblock {\em Transactions of the American Mathematical Society},
  141:465--504, 1969.

\bibitem[Hul86a]{hull1986b}
C.M. Hull.
\newblock Compactifications of the heterotic superstring.
\newblock {\em Physics Letters B}, 178(4):357--364, 1986.

\bibitem[Hul86b]{hull1986c}
C.M. Hull.
\newblock Superstring compactifications with torsion and spacetime
  supersymmetry.
\newblock In {\em 1st Torino Meeting on Superunification and Extra Dimensions},
  pages 347--375. World Scientific, 1986.

\bibitem[Huy05]{huybrechts2005}
D.~Huybrechts.
\newblock {\em Complex Geometry: An Introduction}.
\newblock Universitext. Springer, 2005.

\bibitem[Kry96]{krylov1996}
N.V. Krylov.
\newblock {\em Lectures on Elliptic and Parabolic Equations in H\"older
  Spaces}, volume~12 of {\em Graduate Studies in Mathematics}.
\newblock AMS, 1996.

\bibitem[KS80]{krylov1980}
N.V. Krylov and M.V. Safonov.
\newblock A certain property of solutions of parabolic equations with
  measurable coefficients.
\newblock {\em Izvestiya Rossiiskoi Akademii Nauk. Seriya Matematicheskaya},
  44(1):161--175, 1980.

\bibitem[Leu05]{leung2005}
N.-C.~C. Leung.
\newblock Mirror symmetry without corrections.
\newblock {\em Communications in Analysis and Geometry}, 13(2):287--331, 2005.

\bibitem[LS15]{lejmi2015}
M.~Lejmi and G.~Sz\'ekelyhidi.
\newblock The {J}-flow and stability.
\newblock {\em Advances in Mathematics}, 274:404--431, 2015.

\bibitem[LTY15]{lau2015}
S.-C. Lau, L.-S. Tseng, and S.-T. Yau.
\newblock Non-{K}\"ahler {SYZ} mirror symmetry.
\newblock {\em Communications in Mathematical Physics}, 340(1):145--170, 2015.

\bibitem[LY05]{li2005}
J.~Li and S.-T. Yau.
\newblock The existence of supersymmetric string theory with torsion.
\newblock {\em Journal of Differential Geometry}, 70(1):143--181, 2005.

\bibitem[LY17]{liu2017}
K.-F. Liu and X.-K. Yang.
\newblock Ricci curvatures on {H}ermitian manifolds.
\newblock {\em Transactions of the American Mathematical Society},
  369(7):5157--5196, 2017.

\bibitem[Mic82]{michelsohn1982}
M.-L. Michelsohn.
\newblock On the existence of special metrics in complex geometry.
\newblock {\em Acta Mathematica}, 149(1):261--295, 1982.

\bibitem[MS11]{martelli2011}
D.~Martelli and J.~Sparks.
\newblock Non-{K}\"ahler heterotic rotations.
\newblock {\em Advances in Theoretical and Mathematical Physics},
  15(1):131--174, 2011.

\bibitem[PPZ17a]{phong2017b}
D.H. Phong, S.~Picard, and X.-W. Zhang.
\newblock The {A}nomaly flow on unimodular {L}ie groups.
\newblock {\em arXiv: 1705.09763}, 2017.

\bibitem[PPZ17b]{phong2017}
D.H. Phong, S.~Picard, and X.-W. Zhang.
\newblock New curvature flows in complex geometry.
\newblock In {\em Surveys in Differential Geometry XXII}, pages 331--364, 2017.

\bibitem[PPZ18a]{phong2018d}
D.H. Phong, S.~Picard, and X.-W. Zhang.
\newblock The {A}nomaly flow and the {F}u-{Y}au equation.
\newblock {\em Annals of PDE}, 4(2):13, 2018.

\bibitem[PPZ18b]{phong2018e}
D.H. Phong, S.~Picard, and X.-W. Zhang.
\newblock Anomaly flows.
\newblock {\em Communications in Analysis and Geometry}, 26(4):955--1008, 2018.

\bibitem[PPZ18c]{phong2018c}
D.H. Phong, S.~Picard, and X.-W. Zhang.
\newblock A flow of conformally balanced metrics with {K}\"ahler fixed points.
\newblock {\em arXiv: 1805.01029}, 2018.

\bibitem[PPZ18d]{phong2018b}
D.H. Phong, S.~Picard, and X.-W. Zhang.
\newblock Geometric flows and {S}trominger systems.
\newblock {\em Mathematische Zeitschrift}, 288(1-2):101--113, 2018.

\bibitem[Str86]{strominger1986}
A.E. Strominger.
\newblock Superstrings with torsion.
\newblock {\em Nuclear Physics B}, 274(2):253--284, 1986.

\bibitem[SYZ96]{strominger1996}
A.E. Strominger, S.-T. Yau, and E.~Zaslow.
\newblock Mirror symmetry is {T}-duality.
\newblock {\em Nuclear Physics B}, 479(1):243--259, 1996.

\bibitem[Tos09]{tosatti2009}
V.~Tosatti.
\newblock Limits of {C}alabi-{Y}au metrics when the {K}\"ahler class
  degenerates.
\newblock {\em Journal of the European Mathematical Society}, 11(4):755--776,
  2009.

\bibitem[TW17]{tosatti2017b}
V.~Tosatti and B.~Weinkove.
\newblock The {M}onge-{A}mp\`ere equation for $(n-1)$-plurisubharmonic
  functions on a compact {K}\"ahler manifold.
\newblock {\em Journal of the American Mathematical Society}, 30(2):311--346,
  2017.

\bibitem[TY12]{tseng2012}
L.-S. Tseng and S.-T. Yau.
\newblock Non-{K}\"ahler {C}alabi-{Y}au manifolds.
\newblock In {\em String-Math 2011}, volume~82 of {\em Proceedings of Symposia
  in Pure Mathematics}, pages 241--254. AMS, 2012.

\bibitem[Xia16]{xiao2016}
J.~Xiao.
\newblock Positivity in convergence of the inverse $\sigma_ {n-1}$-flow.
\newblock {\em arXiv: 1610.09584}, 2016.

\bibitem[Yau78]{yau1978}
S.-T. Yau.
\newblock On the {R}icci curvature of a compact {K}\"ahler manifold and the
  complex {M}onge-{A}mp\`ere equation, {I}.
\newblock {\em Communications on Pure and Applied Mathematics}, 31(3):339--411,
  1978.

\end{thebibliography}

\bigskip
\noindent Department of Mathematics, Columbia University, New York, NY 10027, USA\\
tfei@math.columbia.edu

\bigskip
\noindent Department of Mathematics, Harvard University, Cambridge, MA 02138, USA\\
spicard@math.harvard.edu

\end{document}